\documentclass[11pt]{amsart}

\usepackage[T1]{fontenc}
\usepackage{lmodern}
\usepackage{microtype}
\usepackage{amsmath,amssymb}
\usepackage[hidelinks]{hyperref}
\usepackage[nameinlink,noabbrev]{cleveref}

\hypersetup{
  pdftitle={The missing two-point inequality in Weissler's conjecture},
  pdfsubject={Complex hypercontractivity on the Hamming cube},
  pdfkeywords={complex hypercontractivity, Hamming cube, two-point inequality, matrix flow}
}

\numberwithin{equation}{section}
\allowdisplaybreaks

\newtheorem{theorem}{Theorem}[section]
\newtheorem{proposition}[theorem]{Proposition}
\newtheorem{lemma}[theorem]{Lemma}

\newcommand{\C}{\mathbb C}
\newcommand{\R}{\mathbb R}
\newcommand{\Om}{\Omega}
\newcommand{\Rea}{\operatorname{Re}}
\newcommand{\Ima}{\operatorname{Im}}
\newcommand{\sech}{\operatorname{sech}}
\newcommand{\diag}{\operatorname{diag}}
\newcommand{\tr}{\operatorname{tr}}
\newcommand{\arsinh}{\operatorname{arsinh}}
\newcommand{\dd}{\,\mathrm d}
\newcommand{\norm}[1]{\lVert #1\rVert}
\newcommand{\abs}[1]{\lvert #1\rvert}
\newcommand{\ip}[2]{\langle #1,#2\rangle}

\title[The missing two-point inequality]
{The missing two-point inequality in Weissler's conjecture}

\author{Yi C. Huang}
\address{School of Mathematical Sciences, Nanjing Normal University, Nanjing, China}
\email{yi.huang.analysis@gmail.com}

\author{Paata Ivanisvili}
\address{Department of Mathematics, University of California, Irvine,
Irvine, CA 92697, USA}
\email{pivanisv@uci.edu}

\subjclass[2020]{42B35, 42A38, 47A30, 60E15}

\keywords{Complex hypercontractivity, Hamming cube, noise operator,
two-point inequality, matrix flow}

\begin{document}

\begin{abstract}
Weissler's conjectured characterization of complex hypercontractivity on the
Hamming cube remained open in the ranges $2<p\le q<3$ and
$\frac32<p\le q<2$.  Ivanisvili--Nazarov established the diagonal cases.
We prove the remaining strict off-diagonal cases through the corresponding
two-point inequality.  In fact, our argument proves the two-point inequality
for every $2<p<q<\infty$ and, by duality, for every $1<p<q<2$.
\end{abstract}

\maketitle

\section{Statement and proof idea}

Hypercontractivity emerged in constructive quantum field theory and Gaussian analysis. Early work of Nelson, Glimm, and Segal led to the sharp real Gaussian theorem, while Bonami established the corresponding real inequality on finite product groups; see \cite{nelson1966,glimm1968,segal1970,bonami1970,nelson1973}. Gross's logarithmic Sobolev method and Beckner's sharp Hausdorff--Young inequality  made hypercontractivity a central tool in analysis and probability \cite{gross1975,beckner1975}. Modern accounts and applications to Boolean functions may be found in \cite{bakry2014,odonnell2014}.
The complex theory is substantially more delicate because the positivity and Markov structure available for real parameters disappear. Coifman--Cwikel--Rochberg--Sagher--Weiss proved complex Gaussian $L^p\to L^{p'}$ case, and Weissler obtained the general sharp discrete criterion except for an interval $(2,3)$ and its dual \cite{coifman1979,Weissler}. Epperson proposed an early proof of the Gaussian counterpart; that argument contained a gap, while complete Gaussian proofs were supplied by Lieb and Janson. A later heat-flow presentation is due to Hu; see \cite{epperson1989,lieb1990,janson1997,hu2016}. These Gaussian results do not by themselves yield the discrete two-point inequality on the cube.

Let $\mu_n$ be the uniform probability measure on $\{-1,1\}^n$.  If
\[
  f=\sum_{S\subseteq[n]}\widehat f(S)w_S,
  \qquad
  w_S(x)=\prod_{j\in S}x_j,
\]
the complex noise operator is
\[
  T_zf=\sum_{S\subseteq[n]}z^{\abs S}\widehat f(S)w_S.
\]
Let  $1\le p\le q<\infty$. 
We are interested in the inequality of the form 
\begin{align}\label{hyp21}
   \|T_{z}f\|_{q} \leq \|f\|_{p}  \quad \text{for all} \quad f : \{-1,1\}^{n} \to \mathbb{C}
\end{align}
 and all $n\geq 1$. 
Set
\begin{equation}\label{eq:Omega-definition}
  \Om_{p,q}:=
  \left\{z\in\C:
  \abs{p-2-z^2(q-2)}\le p-q\abs z^2\right\}.
\end{equation}
The necessity of the condition $z\in\Om_{p,q}$ is trivial and was known before (see for example \cite{Weissler}). However, sufficiency of the condition $z\in\Om_{p,q}$ was known except in the ranges
\[
  2<p\le q<3,
  \qquad
  \frac32<p\le q<2.
\]
The diagonal cases $p=q$ in these ranges were established more recently by
Ivanisvili--Nazarov~\cite{IvanisviliNazarov}.  It is standard that the
dimension-free statement \eqref{hyp21} is equivalent (see~\cite{Weissler,IvanisviliNazarov}) to the following inequality on the
two-point space:
\begin{equation}\label{eq:two-point-main}
  \left(\frac{\abs{a+zb}^q+\abs{a-zb}^q}{2}\right)^{1/q}
  \le
  \left(\frac{\abs{a+b}^p+\abs{a-b}^p}{2}\right)^{1/p}
  \qquad \; a,b\in\C.
\end{equation}
Thus only the two-point inequality
needs to be proved.

\begin{theorem}\label{thm:two-point}
Let $2<p<q<\infty$ and let $z\in\Om_{p,q}$. Then
\eqref{eq:two-point-main} holds for every $a,b\in\C$.
\end{theorem}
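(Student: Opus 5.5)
We will prove the two-point inequality \eqref{eq:two-point-main} by a flow (interpolation) argument that reduces it to a local differential inequality. The inequality is homogeneous, so we normalize $a=1$ and write $b=w\in\C$. The case $a=0$ is trivial, since then $\abs{z}\le1$ on $\Om_{p,q}$. Set
\[
  \Phi(w,z):=\Bigl(\tfrac{\abs{1+zw}^q+\abs{1-zw}^q}{2}\Bigr)^{1/q}.
\]
We must show $\Phi(w,z)\le \Phi_p(w):=\bigl(\tfrac{\abs{1+w}^p+\abs{1-w}^p}{2}\bigr)^{1/p}$. The first reduction uses the geometry of $\Om_{p,q}$. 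Both sides depend only on $\abs{1\pm zw}$, and $\Om_{p,q}$ is a compact convex set, namely the region bounded by an ellipse in $z^2$-type coordinates. For fixed $w$, the left side is a subharmonic function of $z$ (a power $\ge1$ of moduli of analytic functions, averaged and then raised to $1/q$, which preserves log-subharmonicity). It therefore suffices to check the inequality for $z\in\partial\Om_{p,q}$. We parametrize the boundary by the equality case
\[
  \abs{p-2-z^2(q-2)}=p-q\abs z^2 .
\]

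Next we connect $p$ to $q$. Fix the target pair $(p,q)$ and a boundary point $z$. We build a one-parameter family $r\mapsto(r,z_r)$, $r\in[p,q]$, with $z_p=1$, $z_q=z$, and with $z_r$ composed along the way, in the spirit of semigroup composition: $\Om_{p,r}\cdot\Om_{r,q}\subseteq\Om_{p,q}$. Since the claim is a two-point inequality, it can be iterated only if the boundary of $\Om_{p,q}$ is reached by an infinitesimal generator. We therefore define $F(r):=\log$ of the $L^r$-quantity $\bigl(\tfrac{\abs{1+\zeta(r)w}^r+\abs{1-\zeta(r)w}^r}{2}\bigr)^{1/r}$. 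Here $\zeta$ solves an ODE $\zeta'=\zeta\,\gamma(r)$, with $\gamma$ chosen so that the infinitesimal condition $\frac{d}{dr}$ of \eqref{eq:Omega-definition} is tangent to the boundary. The theorem then follows from $F'(r)\le0$.

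The central step is the derivative computation. Differentiating in $r$ produces an entropy term (a log-Sobolev-type term) plus a term coming from $\Rea(\gamma\,\zeta w\,\overline{(\cdot)})$. Writing $u=\zeta w$ and $X_\pm=\abs{1\pm u}$, the sign of $F'$ reduces to a two-variable inequality in $u$: a complex log-Sobolev inequality on the two-point space, with a weight whose parameters are exactly those defining $\partial\Om_{r,r}$ for the tangent direction. We would diagonalize this via the $2\times2$ "matrix flow": collect the quadratic form in $(\Rea u,\Ima u)$ into a symmetric matrix $M(r,u)$ and show that $M$ is negative semidefinite along the flow.

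The main obstacle is this sign check for $2<r<3$, which is precisely where Weissler's method failed and the pointwise Taylor argument breaks down. Our first attempt would use the diagonal result of Ivanisvili--Nazarov as an input, since it says the generator inequality holds at equal exponents. We would then show that the off-diagonal derivative differs from the diagonal one by a nonnegative multiple of the entropy. That reduction would use the strict inequality $p<q$, which gives a strictly negative real radial component to $\gamma$. If it fails near the endpoints $u\to\pm1$, we would treat those neighbourhoods separately by an asymptotic expansion in $\abs{1\mp u}$.
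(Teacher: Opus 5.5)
Your reduction to $\partial\Om_{p,q}$ via log-subharmonicity of $z\mapsto\norm{T_zf}_q$ is fine; the paper uses radial Markov contraction instead. The gap is that the flow you then run, starting from $\zeta(p)=1$, cannot reach the whole boundary.

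Write $R=r-1$, running from $P=p-1$ to $Q=q-1$, and $\zeta'/\zeta=-c_R/2+i\beta_R$. The condition $X_{R+1}(c_R,\beta_R,1)\succeq0$ is forced if $F'\le0$ is to hold for small $w$, because $H_s(w)=\frac12\bigl((s-1)(\Rea w)^2+(\Ima w)^2\bigr)+O(\abs w^3)$. It gives $Rc_R\ge1$ and $(R-1)^2\beta_R^2\le(Rc_R-1)c_R\le R\bigl(c_R-\frac1{2R}\bigr)^2$. Since $\sqrt R/(R-1)$ is decreasing,
\[
  \Bigl|\int_P^Q\beta_R\dd R\Bigr|
  \le\frac{\sqrt P}{P-1}\int_P^Q\Bigl(c_R-\frac1{2R}\Bigr)\dd R
  =\frac{\sqrt P}{P-1}\Bigl(2\log\frac1{\abs{\zeta(q)}}-\frac12\log\frac QP\Bigr).
\]
Take the boundary point $z=i/\sqrt{q-1}$; reaching it requires $\int_P^Q\beta_R\dd R\equiv\pi/2\pmod{2\pi}$. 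For this $z$ the right side equals $(\tau_P+\tau_Q)/(2\sinh\tau_P)$. This tends to $\tau_P/\sinh\tau_P<1$ as $q\downarrow p$, and is already below $\pi/2$ for $p=5/2$ and every $q\in(5/2,3)$. So no admissible flow from $1$ ends there, however you choose $\gamma$.

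This is why the paper needs the exact factorization of Proposition~\ref{prop:exact-factorization}, $z=\zeta_\sigma(m)\eta_m(Q)$. There $\zeta_+(m)\in\partial\Om_{p,p}$ is handled by the Ivanisvili--Nazarov theorem used as a \emph{global} $L^p$ contraction. Only $\eta_m$ comes from an explicit rank-one flow, built through the parametrization by $\chi_R,\phi_R$. The inclusion $\Om_{p,r}\cdot\Om_{r,q}\subseteq\Om_{p,q}$ points the wrong way; what you need is that boundary points factor.

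The second gap is the sign of $F'$, which is the heart of the theorem. Along a flow with $s'=\kappa\ge0$ you have $F'=\kappa\mathcal E_s-\frac c2\mathcal R_s+\beta\mathcal I_s$ with $\mathcal E_s=\partial_sH_s\ge0$, so the entropy enters with the \emph{unfavourable} sign. The diagonal theorem is a fixed-exponent statement. At best it gives $-\frac c2\mathcal R_s+\beta\mathcal I_s\le0$ when $X_s(c,\beta,0)\succeq0$, and it says nothing about $\mathcal E_s$.

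Splitting $X_s(c,\beta,\kappa)=X_s(c',\beta',0)+X_s(c-c',\beta-\beta',\kappa)$ only moves the problem into the remainder. Suppose you make the remainder purely radial ($\beta'=\beta$), so that only a two-point log-Sobolev inequality $\mathcal R_s\ge2(s-1)\mathcal E_s$ is needed. Then the remainder is a nonzero diagonal positive semidefinite matrix. A rank-one positive semidefinite matrix with nonzero off-diagonal entry dominates no such matrix.

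Boundary-reaching flows force exactly that rank-one situation. Integrate $\frac{\dd}{\dd t}M_\mu^{\mathsf T}D_sM_\mu=-M_\mu^{\mathsf T}X_sM_\mu$ against a kernel vector of $D_p-M_z^{\mathsf T}D_qM_z$; this shows $X_{s(t)}$ is singular along the flow, as with $\det\mathsf X_R=0$ in the paper. So radial and angular derivatives must be controlled jointly against the entropy.

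The paper does this by writing $F'=-\tr(K_s(w)X_s)$ with the nonlinear matrix $K_s(w)$ of Lemma~\ref{lem:nonlinear-K}. It proves $K_s\succeq0$, whose core is $\mathcal E_s\bigl(\frac12\mathcal R_s-(s-1)\mathcal E_s\bigr)\ge\mathcal I_s^2/(4(s-2)^2)$. The proof uses $w=\tanh(a+i\vartheta)$, the strip formula $H_s(\tanh\xi)=\frac1s\log\cosh(sa)-\log\abs{\cosh\xi}$, and the two hyperbolic inequalities of Lemma~\ref{lem:two-hyperbolic}.

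Your proposal has no substitute for this step. The needed object is not a quadratic form in $(\Rea u,\Ima u)$, and expansions near $u=\pm1$ do not give a global sign.
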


The new range needed for Weissler's conjecture is $2<p<q<3$, but the
proof below establishes the stronger statement in
\Cref{thm:two-point}. By a standard tenzorisation argument \cite{IvanisviliNazarov}, this yields the
dimension-free complex hypercontractivity; Step~5 gives the dual range below two.

Here is the proof idea.  Complex multiplication by $z$ is first represented
by a real $2\times2$ matrix.  Condition~\eqref{eq:Omega-definition} then
becomes
\[
  M_z^{\mathsf T}D_qM_z\preceq D_p,
  \qquad
  D_s=\diag(s-1,1).
\]
This identifies the admissible region and its first-order admissibility
condition.

Next, for
\[
  H_s(w)=\log
  \left(\frac{\abs{1+w}^s+\abs{1-w}^s}{2}\right)^{1/s},
\]
we prove that the derivative of $H_s$ along every direction satisfying
the first-order admissibility condition is nonpositive. The derivative
is the negative trace pairing of the corresponding first-order
admissibility matrix with an explicit nonlinear $2\times2$ matrix.  The
positivity of the latter is reduced, through the substitution
$w=\tanh(a+i\vartheta)$, to two elementary one-variable hyperbolic
inequalities.

Finally, every radial boundary point of $\Om_{p,q}$ is factored exactly into
a diagonal multiplier in $\Om_{p,p}$ and a multiplier obtained by integrating
rank-one generators satisfying the first-order admissibility condition.  The diagonal factor is controlled
by the theorem of Ivanisvili--Nazarov; the other factor is controlled by the
infinitesimal inequality.  Radial contraction handles the interior, and
operator duality gives the range below two.

\section{Detailed step-by-step proof}\label{sec:detailed-proof}

\subsection{Step 1: the matrix form and the radial boundary}

Identify $\C$ with $\R^2$ and define
\begin{equation}\label{eq:D-M-definitions}
  D_s:=\begin{pmatrix}s-1&0\\0&1\end{pmatrix},
  \qquad
  M_z:=\begin{pmatrix}\Rea z&-\Ima z\\ \Ima z&\Rea z\end{pmatrix}.
\end{equation}
Thus $M_z$ is real multiplication by $z$.

\begin{lemma}[Matrix form of admissibility]\label{lem:matrix-domain}
For $1\le p\le q<\infty$ and $z\in\C$,
\begin{equation}\label{eq:matrix-equivalence}
  z\in\Om_{p,q}
  \quad\Longleftrightarrow\quad
  M_z^{\mathsf T}D_qM_z\preceq D_p.
\end{equation}
\end{lemma}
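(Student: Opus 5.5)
The plan is to compute the symmetric matrix $N:=D_p-M_z^{\mathsf T}D_qM_z$ explicitly and to recognize the two sides of the inequality in \eqref{eq:Omega-definition} as its trace and its eigenvalue gap. Equivalence \eqref{eq:matrix-equivalence} then reduces to the elementary fact that a real symmetric $2\times2$ matrix is positive semidefinite if and only if its trace and its determinant are both nonnegative.

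\textbf{Computing $N$.} Write $z=x+iy$. The columns of $M_z$ are $(x,y)^{\mathsf T}$ and $(-y,x)^{\mathsf T}$, so a direct multiplication gives
\[
  M_z^{\mathsf T}D_qM_z=
  \begin{pmatrix}(q-1)x^2+y^2 & -(q-2)xy\\ -(q-2)xy & x^2+(q-1)y^2\end{pmatrix}.
\]
Hence
\[
  N=\begin{pmatrix}p-1-(q-1)x^2-y^2 & (q-2)xy\\ (q-2)xy & 1-x^2-(q-1)y^2\end{pmatrix}.
\]

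\textbf{Identifying the two sides.} From this form:
\begin{itemize}
\item $\tr N=p-q(x^2+y^2)=p-q\abs z^2$, which is the right-hand side of \eqref{eq:Omega-definition};
\item $N_{11}-N_{22}=p-2-(q-2)(x^2-y^2)=\Rea\bigl(p-2-z^2(q-2)\bigr)$;
\item $2N_{12}=2(q-2)xy=-\Ima\bigl(p-2-z^2(q-2)\bigr)$.
\end{itemize}
Therefore
\[
  \abs{p-2-z^2(q-2)}^2=(N_{11}-N_{22})^2+4N_{12}^2=(\tr N)^2-4\det N.
\]
The left-hand side of \eqref{eq:Omega-definition} is thus the gap $\lambda_1-\lambda_2$ between the eigenvalues of $N$.

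\textbf{The two implications.}
\begin{itemize}
\item Suppose $z\in\Om_{p,q}$. Then $\tr N\ge\abs{p-2-z^2(q-2)}\ge0$. Squaring both nonnegative sides gives $(\tr N)^2-4\det N\le(\tr N)^2$, so $\det N\ge0$. Both eigenvalues of $N$ then have nonnegative sum and nonnegative product, so both are nonnegative and $N\succeq0$.
\item Suppose $N\succeq0$. Then $\tr N\ge0$ and $\det N\ge0$, so $\abs{p-2-z^2(q-2)}^2\le(\tr N)^2$. Taking square roots, using $\tr N\ge0$, gives the defining inequality of $\Om_{p,q}$.
\end{itemize}

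There is no genuine obstacle here. The only care needed is getting the off-diagonal sign and the factor $2$ right, so that $(N_{11}-N_{22})^2+4N_{12}^2$ matches $\abs{\,\cdot\,}^2$ exactly. One must also use $\tr N\ge0$ when passing between the squared and unsquared inequalities. The hypothesis $1\le p\le q$ plays no role beyond the definitions.
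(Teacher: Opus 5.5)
Your proof is correct and follows the paper's argument. You use the same explicit computation of $D_p-M_z^{\mathsf T}D_qM_z$ and the same identification of its trace and of $(N_{11}-N_{22})^2+4N_{12}^2$ with the two sides of \eqref{eq:Omega-definition}; the only cosmetic difference is that you conclude via the trace--determinant criterion for positive semidefiniteness, whereas the paper reads the condition directly off the explicit formula for the smaller eigenvalue.
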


\begin{proof}
Write $z=x+iy$ and set $H=D_p-M_z^{\mathsf T}D_qM_z$.  Direct
multiplication gives
\begin{equation}\label{eq:H-explicit}
  H=
  \begin{pmatrix}
    p-1-(q-1)x^2-y^2 & (q-2)xy\\
    (q-2)xy & 1-x^2-(q-1)y^2
  \end{pmatrix}.
\end{equation}
Hence
\begin{align}
  \tr H&=p-q\abs z^2,\label{eq:H-trace}\\
  (H_{11}-H_{22})+2iH_{12}
  &=p-2-(q-2)\overline z^{\,2}.\label{eq:H-traceless}
\end{align}

For a real symmetric $2\times2$ matrix $H$, its two eigenvalues are
\[
  \frac12\left(
    \tr H\pm
    \sqrt{(H_{11}-H_{22})^2+4H_{12}^2}
  \right).
\]
Consequently,
\[
  H\succeq0
  \quad \text{if and only if}\quad
  \tr H\ge
  \sqrt{(H_{11}-H_{22})^2+4H_{12}^2}.
\]
Substitution of
\eqref{eq:H-trace}--\eqref{eq:H-traceless} therefore gives exactly
\eqref{eq:Omega-definition}.
\end{proof}

From now on let
\begin{equation}\label{eq:PQ-definition}
  P:=p-1>1,
  \qquad
  Q:=q-1>P.
\end{equation}
For $z=\rho e^{i\theta}$, expansion of the determinant in
\eqref{eq:H-explicit} gives
\begin{equation}\label{eq:det-polar}
  \det\bigl(D_p-M_z^{\mathsf T}D_qM_z\bigr)
  =P-A_{P,Q}(\theta)\rho^2+Q\rho^4,
\end{equation}
where
\begin{equation}\label{eq:A-PQ}
  A_{P,Q}(\theta)
  =(PQ+1)\sin^2\theta+(P+Q)\cos^2\theta.
\end{equation}

\begin{lemma}[Radial boundary]\label{lem:radial-boundary}
In every direction $e^{i\theta}$ there is a unique radial boundary point
$\rho_*(\theta)e^{i\theta}$ of $\Om_{p,q}$.  Its squared radius is the
smaller root of
\begin{equation}\label{eq:boundary-quadratic}
  Qy^2-A_{P,Q}(\theta)y+P=0,
  \qquad y=\rho^2,
\end{equation}
and the two roots are distinct.
\end{lemma}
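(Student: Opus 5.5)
We use \Cref{lem:matrix-domain}: a point $\rho e^{i\theta}$ lies in $\Om_{p,q}$ exactly when the symmetric matrix $H(\rho)=D_p-M_z^{\mathsf T}D_qM_z$ is positive semidefinite. We then study how $\tr H$ and $\det H$ vary along the ray $\rho\mapsto\rho e^{i\theta}$. By \eqref{eq:H-trace},
\[
\tr H=p-q\rho^2,
\]
which is strictly decreasing in $\rho$. By \eqref{eq:det-polar},
\[
\det H=g(\rho^2),\qquad g(y)=Qy^2-A_{P,Q}(\theta)y+P,
\]
a convex quadratic with $g(0)=P>0$.

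First we show that $g$ has two distinct positive roots $0<y_-<y_+$. Since $Q>0$, $P>0$ and $A_{P,Q}(\theta)>0$, the two roots, if real, are both positive. Distinctness amounts to $A_{P,Q}(\theta)^2>4PQ$. Because $A_{P,Q}(\theta)$ is a convex combination of $PQ+1$ and $P+Q$, it suffices to check two inequalities:
\[
P+Q>2\sqrt{PQ},\qquad PQ+1>2\sqrt{PQ}.
\]
The first is AM--GM, strict since $P\ne Q$. The second is AM--GM for $PQ$ and $1$, strict since $PQ>1$ (recall $P>1$ and $Q>1$ by \eqref{eq:PQ-definition}). Hence $A_{P,Q}(\theta)\ge 2\sqrt{PQ}+\varepsilon$ for some $\varepsilon>0$, and the roots are real and distinct.

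Next we identify $\Om_{p,q}$ on the ray with $[0,\sqrt{y_-}]$. A $2\times2$ symmetric matrix is positive semidefinite iff its trace and determinant are both nonnegative. For $\rho^2\in[0,y_-]$ we have $g\ge0$, so it remains to show $\tr H>0$ there, i.e.\ $y_-<p/q$. The natural check is to evaluate
\[
g(p/q)=Q\frac{p^2}{q^2}-A_{P,Q}(\theta)\frac{p}{q}+P
\]
and show it is negative. Negativity places $p/q$ strictly between the roots, which gives $y_-<p/q<y_+$. Since $g(p/q)$ is affine in $\sin^2\theta$, it is enough to check $\theta=0$ and $\theta=\pi/2$.

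At $\theta=0$ one has $A_{P,Q}(0)=P+Q$, so $g(y)=(Qy-P)(y-1)$. With $y=p/q<1$ and $Qp/q-P=(q-p)/q>0$, this gives $g(p/q)<0$.

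At $\theta=\pi/2$ one has $A_{P,Q}(\pi/2)=PQ+1$, so $g(y)=(Qy-1)(y-P)$. Here $Qp/q-1=(pq-p-q)/q>0$ because $p,q>2$, and $p/q<1<P$, so again $g(p/q)<0$.

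Combining: for $\rho^2\in[0,y_-]$, $\tr H>0$ and $\det H\ge0$, so $H\succeq0$. For $\rho^2\in(y_-,y_+)$, $\det H<0$, so $H$ is indefinite. For $\rho^2\ge y_+>p/q$, $\tr H<0$, so $H\not\succeq0$. Thus $\Om_{p,q}\cap\{\rho e^{i\theta}:\rho\ge0\}=\{\rho e^{i\theta}:0\le\rho\le\sqrt{y_-}\}$. This is an interval starting at the origin, so its unique boundary point on the ray is $\rho_*(\theta)=\sqrt{y_-}$, the square root of the smaller root.

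The main obstacle is ruling out the larger root $y_+$ as a second boundary point. This is handled entirely by the sign check of $g(p/q)$ at the two extreme angles, using $p,q>2$. If one wants the lemma for all $1\le p\le q$, this step would need more care, but in the present context $P,Q>1$ suffices.
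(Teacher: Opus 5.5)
Your proof is correct, but it excludes the larger root by a different route from the paper's.

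\textbf{The paper's route.} It notes that $B_\theta=M_{e^{i\theta}}^{\mathsf T}D_qM_{e^{i\theta}}$ is positive definite. Hence $D_p-\rho^2B_\theta$ is strictly decreasing in the Loewner order as a function of $\rho^2$. It follows at once that the admissible radii form an interval $[0,\rho_*(\theta)]$ on whose interior the matrix is positive definite. So $\rho_*(\theta)^2$ is the first zero of the determinant, i.e.\ the smaller root. No computation is needed beyond the discriminant.

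\textbf{Your route.} You use the trace--determinant criterion and show $y_-<p/q<y_+$. To do this you evaluate $g(p/q)$, which is affine in $\sin^2\theta$, at $\theta=0$ and $\theta=\pi/2$, using the factorizations $(Qy-P)(y-1)$ and $(Qy-1)(y-P)$. These factorizations and the resulting signs are correct.

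\textbf{Comparison.} Your argument is fully explicit, and it gives extra information: $\tr H>0$ on the admissible segment and $\tr H<0$ beyond the larger root. The cost is that it uses $p<q$ and $PQ>1$ in the localization step itself. The paper's monotonicity argument gives the interval structure for any $q>1$, and only the distinctness of the roots needs $1<P<Q$.
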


\begin{proof}
Put
$B_\theta=M_{e^{i\theta}}^{\mathsf T}D_qM_{e^{i\theta}}$.  Since
$B_\theta$ is positive definite,
\[
  D_p-\rho^2B_\theta
\]
is strictly decreasing in the Loewner order as a function of $\rho^2$.
It is positive definite at $\rho=0$ and eventually negative definite.
Thus admissible radii form an interval $[0,\rho_*(\theta)]$, and the
determinant vanishes at its positive endpoint.  Formula
\eqref{eq:det-polar} gives \eqref{eq:boundary-quadratic}.  Moreover,
$A_{P,Q}(\theta)$ is a convex combination of $P+Q$ and $PQ+1$, and both
numbers are strictly larger than $2\sqrt{PQ}$ because $1<P<Q$.  Hence the
discriminant is positive.  Since the determinant is positive at the origin,
the first zero is the smaller root.
\end{proof}

For $R>1$, set
\begin{equation}\label{eq:tau-R}
  \tau_R:=\frac12\log R.
\end{equation}
Then
\begin{equation}\label{eq:tau-identities}
  \cosh\tau_R=\frac{R+1}{2\sqrt R},
  \qquad
  \sinh\tau_R=\frac{R-1}{2\sqrt R},
\end{equation}
and \eqref{eq:A-PQ} becomes
\begin{equation}\label{eq:A-hyperbolic}
  \frac{A_{P,Q}(\theta)}{2\sqrt{PQ}}
  =\cosh\tau_P\cosh\tau_Q
   -\sinh\tau_P\sinh\tau_Q\cos(2\theta).
\end{equation}

We use one external input, namely the diagonal theorem of
Ivanisvili--Nazarov~\cite{IvanisviliNazarov}: if $s>1$ and
$\zeta\in\Om_{s,s}$, then, for every two-point function $f$,
\begin{equation}\label{eq:diagonal-input}
  \norm{T_\zeta f}_{L^s(\mu_1)}\le \norm f_{L^s(\mu_1)}.
\end{equation}

\subsection{Step 2: the infinitesimal two-point inequality}

For $s>2$ and $w\in\C$, define
\begin{equation}\label{eq:Hs-definition}
  \mathcal N_s(w):=
  \left(\frac{\abs{1+w}^s+\abs{1-w}^s}{2}\right)^{1/s},
  \qquad
  H_s(w):=\log\mathcal N_s(w).
\end{equation}
Write $w=x+iy$. Viewing $H_s$ as a real-valued function on
$\C\simeq\R^2$, its ordinary Euclidean gradient is
\[
  \nabla_{\R^2}H_s(w)
  =
  \bigl(\partial_xH_s(w),\partial_yH_s(w)\bigr).
\]
We encode this vector as the complex number
\begin{equation}\label{eq:Gs-definition}
  G_s(w):=\partial_xH_s(w)+i\partial_yH_s(w).
\end{equation}
Thus $G_s$ is not a holomorphic derivative; it is the ordinary
two-dimensional real gradient written in complex notation.

Finally, define the real-valued quantities
\begin{equation}\label{eq:ERI}
  \mathcal E_s(w):=\partial_sH_s(w),\qquad
  \mathcal R_s(w):=\Rea\!\bigl(\overline wG_s(w)\bigr),\qquad
  \mathcal I_s(w):=\Ima\!\bigl(\overline wG_s(w)\bigr).
\end{equation}
If $w=re^{i\phi}\ne0$, then
\[
  \mathcal R_s(w)=r\,\partial_rH_s(re^{i\phi}),
  \qquad
  \mathcal I_s(w)=\partial_\phi H_s(re^{i\phi}).
\]
Thus $\mathcal R_s$ is the logarithmic-radial derivative, whereas
$\mathcal I_s$ is the angular derivative.  The function $(s,w)\mapsto H_s(w)$ is continuously
differentiable on $(2,\infty)\times\C$, including at $w=\pm1$: this follows
from the continuous differentiability of $(s,u)\mapsto\abs u^s$ for $s>2$.

For $s>2$, $\kappa\ge0$, and $c,\beta\in\R$, put
\begin{equation}\label{eq:X-matrix}
  X_s(c,\beta,\kappa):=
  \begin{pmatrix}
    (s-1)c-\kappa & (s-2)\beta\\
    (s-2)\beta & c
  \end{pmatrix}.
\end{equation}
The first-order admissibility condition is
$X_s(c,\beta,\kappa)\succeq0$. Indeed, if
\[
  s(t)=s+\kappa t,
  \qquad
  \mu(t)=e^{(-c/2+i\beta)t},
  \qquad
  A=\begin{pmatrix}-c/2&-\beta\\ \beta&-c/2\end{pmatrix},
\]
then $M_{\mu(t)}=I+tA+O(t^2)$, and
\begin{align}
  D_s-M_{\mu(t)}^{\mathsf T}D_{s(t)}M_{\mu(t)}
  &=tX_s(c,\beta,\kappa)+O(t^2).
  \label{eq:first-order-expansion}
\end{align}
To verify this, we notice 
\[
  D_{s(t)}=D_s+t\dot D,
  \qquad
  \dot D:=\diag(\kappa,0).
\]
Expanding the product to first order gives
\[
\begin{aligned}
  M_{\mu(t)}^{\mathsf T}D_{s(t)}M_{\mu(t)}
  &=(I+tA^{\mathsf T})(D_s+t\dot D)(I+tA)+O(t^2)\\
  &=D_s+t\bigl(A^{\mathsf T}D_s+D_sA+\dot D\bigr)+O(t^2).
\end{aligned}
\]
A direct multiplication gives
\[
  A^{\mathsf T}D_s+D_sA+\dot D
  =
  \begin{pmatrix}
    \kappa-(s-1)c&-(s-2)\beta\\
    -(s-2)\beta&-c
  \end{pmatrix}
  =-X_s(c,\beta,\kappa).
\]
This proves \eqref{eq:first-order-expansion}.
Thus $X_s(c,\beta,\kappa)$ is the first variation of the admissibility
matrix. We shall use only this first-order condition below.
\begin{theorem}[Infinitesimal contraction]\label{thm:infinitesimal}
If $s>2$, $\kappa\ge0$, and
\begin{equation}\label{eq:X-psd}
  X_s(c,\beta,\kappa)\succeq0,
\end{equation}
then, for every $w\in\C$,
\begin{equation}\label{eq:inf-derivative}
  \left.\frac{\dd}{\dd t}\right|_{t=0}
  H_{s+\kappa t}\!\left(e^{(-c/2+i\beta)t}w\right)
  \le0.
\end{equation}
\end{theorem}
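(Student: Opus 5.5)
By the chain rule, the derivative in \eqref{eq:inf-derivative} is
\[
  \kappa\,\mathcal E_s(w)+\Rea\bigl(\overline{(-c/2+i\beta)w}\,G_s(w)\bigr)
  =\kappa\,\mathcal E_s(w)-\tfrac c2\,\mathcal R_s(w)+\beta\,\mathcal I_s(w).
\]
This is a linear functional of the triple $(c,\beta,\kappa)$. We therefore write it as $-\tr\bigl(X_s(c,\beta,\kappa)\,Y_s(w)\bigr)$ for an explicit real symmetric $2\times2$ matrix $Y_s(w)$, and then prove $Y_s(w)\succeq0$. The trace pairing of two positive semidefinite matrices is nonnegative, so this gives \eqref{eq:inf-derivative}.

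\textbf{Identifying $Y_s$.} Write $Y_s=\begin{pmatrix}\alpha&\gamma\\ \gamma&\delta\end{pmatrix}$. Then
\[
  \tr(XY)=\bigl((s-1)c-\kappa\bigr)\alpha+2(s-2)\beta\gamma+c\,\delta .
\]
Matching coefficients of $\kappa$, $\beta$ and $c$ gives
\[
  \alpha=\mathcal E_s,\qquad
  \gamma=-\frac{\mathcal I_s}{2(s-2)},\qquad
  (s-1)\alpha+\delta=\frac{\mathcal R_s}{2}.
\]
Hence $\delta=\tfrac12\mathcal R_s-(s-1)\mathcal E_s$. Positivity of $Y_s$ then amounts to three conditions:
\begin{itemize}
\item[(i)] $\mathcal E_s(w)\ge0$;
\item[(ii)] $\tfrac12\mathcal R_s-(s-1)\mathcal E_s\ge0$;
\item[(iii)] $4(s-2)^2\,\mathcal E_s\bigl(\tfrac12\mathcal R_s-(s-1)\mathcal E_s\bigr)\ge\mathcal I_s^2$.
\end{itemize}
Condition (i) is the monotonicity of $L^s$ norms in $s$ on a probability space, so it is immediate. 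Condition (ii) follows from (iii) together with (i), except in the degenerate case $\mathcal E_s=0$. That case occurs when $\abs{1+w}=\abs{1-w}$, i.e.\ $w\in i\R$. There $\mathcal I_s=0$ by symmetry, and (ii) reduces to $\mathcal R_s\ge0$, which holds because the function is radially increasing. So the real content is (iii).

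\textbf{Making (iii) computable.} Substitute $w=\tanh(a+i\vartheta)$. Then $1\pm w=e^{\pm(a+i\vartheta)}/\cosh(a+i\vartheta)$, so
\[
  \abs{1\pm w}=\frac{e^{\pm a}}{\abs{\cosh(a+i\vartheta)}}.
\]
Consequently
\[
  H_s=\frac1s\log\cosh(sa)-\log\abs{\cosh(a+i\vartheta)} .
\]
The nasty $\log$ of a power sum becomes $\log\cosh(sa)$, and the $s$-independent factor drops out of $\mathcal E_s$:
\[
  \mathcal E_s=\frac{a\tanh(sa)}{s}-\frac{\log\cosh(sa)}{s^2}.
\]
The map $w\mapsto a+i\vartheta$ is holomorphic (it is $\operatorname{artanh}$). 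So $\overline wG_s$ can be computed as $\bar w\cdot\overline{(\text{holomorphic derivative})}$ times the gradient in $(a,\vartheta)$. The gradient of $H_s$ in $(a,\vartheta)$ is explicit:
\[
  \partial_aH_s=\tanh(sa)-\Rea\tanh(a+i\vartheta),\qquad
  \partial_\vartheta H_s=\Ima\tanh(a+i\vartheta),
\]
and
\[
  \frac{dw}{d(a+i\vartheta)}=1-w^2=\sech^2(a+i\vartheta).
\]
Expanding, $\mathcal R_s$ and $\mathcal I_s$ become expressions in $\tanh(sa)$, $\sinh 2a$, $\sin 2\vartheta$, $\cos 2\vartheta$, with denominators like $\cosh 2a+\cos 2\vartheta$.

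\textbf{The main obstacle.} We then prove (iii). My plan is to treat (iii) as a quadratic inequality in the trigonometric variable (say $u=\cos 2\vartheta$, with $\sin^2 2\vartheta=1-u^2$), after clearing the common positive denominator. I would show it holds for every admissible $u$ by checking the endpoints $u=\pm1$ (that is, $w$ real or on the circle pattern $\vartheta=\pi/2$) and, if needed, the interior critical point.

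I expect this to reduce to two one-variable hyperbolic inequalities in $a$, one for each extreme case. They should be of the shape
\[
  \mathcal E_s(a)\,\bigl(\text{radial term}\bigr)\ge C_s\cdot(\text{explicit hyperbolic function of }a).
\]
Each would then be proved by monotonicity or a series comparison, using $s>2$ through the factor $(s-2)^2$.

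The hardest step is finding the right organization so that the $\vartheta$-dependence separates cleanly. If the quadratic in $u$ is not directly sign-definite, I would instead optimize (iii) over $\vartheta$ via a Cauchy--Schwarz bound on $\mathcal I_s^2$ against the $\vartheta$-dependent part of $\mathcal R_s$. Finally, continuity in $w$ handles $w=\pm1$, where the substitution degenerates.
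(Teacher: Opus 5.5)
\textbf{Gap: condition (iii) is never proved.} Your reduction is exactly the paper's. The chain-rule formula, the matrix you call $Y_s$ (the paper's $K_s$), and the conclusion $\tr(Y_sX_s)\ge0$ all coincide with the published argument. Your treatment of (i), of (ii) through the degenerate case $w\in i\R$, and of the strip formula $H_s=\frac1s\log\cosh(sa)-\log\abs{\cosh(a+i\vartheta)}$ is also correct. However, everything after ``The main obstacle'' is a forecast rather than a proof, and condition (iii) carries the entire analytic content of the theorem (the paper isolates it as a separate lemma). As written, you have only shown that the theorem is equivalent to $Y_s(w)\succeq0$.

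Three things are missing. First, you need the simplification
\[
  \overline wG_s(w)=\frac12\Bigl(1-\frac{\cosh((s-2)a+2i\vartheta)}{\cosh(sa)}\Bigr),
\]
which removes the $\vartheta$-dependent denominators you anticipate. Set $x=sa\ge0$, $r=(s-2)/s$, $C=\cosh(rx)/\cosh x$, $S=\sinh(rx)/\cosh x$, $\gamma=\cos2\vartheta$, and $\mathcal L(x)=x\tanh x-\log\cosh x$. Then $\mathcal E_s=\mathcal L(x)/s^2$, $\mathcal R_s=\frac12(1-C\gamma)$ and $\mathcal I_s=-\frac12S\sin2\vartheta$, and (iii) becomes
\[
  \mathcal Q(\gamma):=4r^2\mathcal L(x)\bigl(1-(1-r^2)\mathcal L(x)-C\gamma\bigr)-S^2(1-\gamma^2)\ge0,
  \qquad -1\le\gamma\le1.
\]

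Second, $\mathcal Q$ is \emph{convex} in $\gamma$, so checking $\gamma=\pm1$ proves nothing by itself. Your guess of ``one inequality for each extreme case'' is also not how it goes (note too that both $\gamma=\pm1$ correspond to real $w$). One inequality places the vertex at $\gamma_*=2r^2\mathcal L(x)C/S^2\ge1$, namely $2r^2\mathcal L(x)\cosh(rx)\cosh x\ge\sinh^2(rx)$. Then $\mathcal Q$ is nonincreasing on $[-1,1]$, so the endpoint $\gamma=-1$ is irrelevant. The other inequality is $\mathcal Q(1)\ge0$, i.e.\ $1-\cosh(rx)/\cosh x\ge(1-r^2)\mathcal L(x)$.

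Third, both inequalities must hold uniformly in $x\ge0$ and $0\le r\le1$, and their proofs are not routine. The paper proves $\mathcal Q(1)\ge0$ by a single-sign-change argument in $r$ together with $\mathcal L(x)\le1-\sech x$. It proves the vertex bound from the unimodality of $\sinh^2y/(y^2\cosh y)$ combined with $2\mathcal L(x)\cosh x\ge x^2$ and $2\mathcal L(x)\ge\tanh^2x$. Until these steps are supplied, the proposal establishes only the linear-algebra reduction, not the theorem.
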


The proof rests on the following nonlinear matrix.

\begin{lemma}[Nonlinear dual matrix]\label{lem:nonlinear-K}
For every $s>2$ and $w\in\C$, the matrix
\begin{equation}\label{eq:K-matrix}
  K_s(w):=
  \begin{pmatrix}
    \mathcal E_s(w) & -\dfrac{\mathcal I_s(w)}{2(s-2)}\\[7pt]
    -\dfrac{\mathcal I_s(w)}{2(s-2)} &
    \dfrac{\mathcal R_s(w)}2-(s-1)\mathcal E_s(w)
  \end{pmatrix}
\end{equation}
is positive semidefinite.
\end{lemma}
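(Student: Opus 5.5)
Our plan is to compute the three quantities $\mathcal E_s,\mathcal R_s,\mathcal I_s$ explicitly and reduce positivity of $K_s(w)$ to elementary inequalities. We use the substitution $w=\tanh(a+i\vartheta)$ announced in the proof idea. Since $1\pm w=\frac{\cosh(a+i\vartheta)\pm\sinh(a+i\vartheta)}{\cosh(a+i\vartheta)}=e^{\pm(a+i\vartheta)}/\cosh(a+i\vartheta)$, we get
\[
  \frac{\abs{1+w}^s+\abs{1-w}^s}{2}=\frac{\cosh(sa)}{\abs{\cosh(a+i\vartheta)}^s}.
\]
Hence
\[
  H_s(w)=\tfrac1s\log\cosh(sa)-\log\abs{\cosh(a+i\vartheta)},
\]
which separates $s$-dependence from the conformal change of variables. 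The values $w=\pm1$ and $w=\infty$ are limits handled by continuity, since $H_s$ is $C^1$. Note $\abs{\cosh(a+i\vartheta)}^2=\cosh^2a-\sin^2\vartheta$.

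First, $\mathcal E_s=\partial_sH_s=\frac{1}{s^2}\bigl(sa\tanh(sa)-\log\cosh(sa)\bigr)$, which depends only on $a$ and is nonnegative. Next, $\mathcal R_s+i\mathcal I_s=\overline wG_s$ is computed through the chain rule for the holomorphic map $\xi=a+i\vartheta\mapsto w=\tanh\xi$. Writing $h(\xi)=H_s$, the real gradient transforms as $G_s=\overline{\,(\partial_a h+i\partial_\vartheta h)\,/\,\overline{w'(\xi)}\,}$ up to the standard conjugation conventions. With $w'=\sech^2\xi$, this gives
\[
  \overline wG_s=\overline{\tanh\xi}\,\cosh^2\xi\,\cdot\,\bigl(\partial_ah+i\partial_\vartheta h\bigr)\ \text{(conjugated appropriately)},
\]
and $\tanh\xi\cosh^2\xi=\tfrac12\sinh 2\xi$. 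So $\mathcal R_s,\mathcal I_s$ are explicit combinations of $\sinh 2a\cos2\vartheta$, $\cosh2a\sin2\vartheta$, together with $\partial_ah=\tanh(sa)-\frac{\sinh2a}{2\abs{\cosh\xi}^2}$ and $\partial_\vartheta h=\frac{\sin2\vartheta}{2\abs{\cosh\xi}^2}$. The logarithmic-conformal terms should simplify considerably, e.g. the pure $\log\abs{\cosh\xi}$ part contributes to $\overline w G_s$ a term like $-\tfrac12\abs{\tanh\xi}^2$-type expressions.

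Then positivity of $K_s$ amounts to two conditions. The first is $K_{11}=\mathcal E_s\ge0$, which is already done. The second is the determinant inequality
\[
  \mathcal E_s\Bigl(\tfrac{\mathcal R_s}{2}-(s-1)\mathcal E_s\Bigr)\ge\frac{\mathcal I_s^2}{4(s-2)^2}.
\]
This inequality also forces $K_{22}\ge0$ when $\mathcal E_s>0$; the case $a=0$ must be checked separately, where $\mathcal E_s=0$ and we need $\mathcal I_s=0$. Since $\mathcal E_s$ depends only on $a$, we fix $a$ and view the determinant inequality as a statement in $\vartheta$. The plan is to show it is a quadratic-in-trigonometric-functions inequality whose worst case in $\vartheta$ can be found explicitly, for instance by optimizing over $\cos2\vartheta$ after expressing $\mathcal R_s$ and $\mathcal I_s^2$ in terms of $u=\cos2\vartheta$. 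This should collapse to two one-variable hyperbolic inequalities in $a$ (with parameter $s$), matching the paper's description. One plausible form is $\frac{\tanh(sa)}{\tanh a}\ge$ something, together with a bound comparing $sa\tanh(sa)-\log\cosh(sa)$ with $\tanh(sa)$-type quantities.

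The main obstacle is this last reduction: eliminating $\vartheta$ cleanly and proving the resulting hyperbolic inequalities for all $s>2$, $a>0$. We would first try a discriminant/Cauchy–Schwarz argument in $\vartheta$. Failing that, we would handle the endpoint $\vartheta$ values and show monotonicity in between. For the one-variable inequalities, we would use series or derivative comparisons in $t=sa$, exploiting that $s>2$ enters through $(s-2)^2$ in the denominator.
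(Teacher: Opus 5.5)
The proposal has a genuine gap: it sets up the paper's framework correctly but stops where the lemma's content begins, and what you call ``the main obstacle'' is essentially the whole proof.

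\textbf{The computation of $\overline wG_s$.} Your setup matches the paper's: the strip formula $H_s(\tanh\xi)=\frac1s\log\cosh(sa)-\log\abs{\cosh\xi}$ and $\mathcal E_s=s^{-2}\mathcal L(sa)$, with $\mathcal L(x)=x\tanh x-\log\cosh x$. But your chain rule is misstated. For $h=H_s\circ\tanh$ one has $G_s=(\partial_ah+i\partial_\vartheta h)/\overline{w'(\xi)}$. Hence $\overline wG_s=\frac12\overline{\sinh2\xi}\,(\partial_ah+i\partial_\vartheta h)$, not $\overline{\tanh\xi}\cosh^2\xi(\cdots)$. The observation you are missing is that your two partial derivatives combine to $\partial_ah+i\partial_\vartheta h=\tanh(sa)-\overline w$. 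This gives the closed form
\[
  \overline wG_s=\frac12\Bigl(1-\frac{\cosh((s-2)a+2i\vartheta)}{\cosh(sa)}\Bigr).
\]
Put $\gamma=\cos2\vartheta$, $x=sa$, $r=(s-2)/s$, $C=\cosh(rx)/\cosh x$ and $S=\sinh(rx)/\cosh x$. Then $\mathcal R_s=\frac12(1-C\gamma)$ and $\mathcal I_s=-\frac12S\sin2\vartheta$. Only with this form is $\det K_s$ a positive multiple of an explicit quadratic $\mathcal Q(\gamma)$ with leading coefficient $S^2$. Also, at $a=0$ you need $K_{22}=\mathcal R_s/2\ge0$, not just $\mathcal I_s=0$.

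\textbf{Eliminating $\vartheta$.} This is not a discriminant argument. The unconstrained minimum of $\mathcal Q$ over $\gamma\in\R$ is negative for some parameters, for instance $s$ large and $sa$ small. So the constraint $\gamma\le1$ is essential. The paper's key step is that the vertex satisfies $\gamma_*=2r^2\mathcal L(x)C/S^2\ge1$. Hence the minimum on $[-1,1]$ sits at the endpoint $\gamma=1$, i.e.\ at real $w$. This requires exactly
\[
  2r^2\mathcal L(x)\cosh(rx)\cosh x\ge\sinh^2(rx),
  \qquad
  1-\frac{\cosh(rx)}{\cosh x}\ge(1-r^2)\mathcal L(x),
\]
for $x\ge0$ and $0\le r\le1$. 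These are not bounds of the ``$\tanh(sa)/\tanh a$'' type you guess.

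\textbf{Proving the two inequalities.} Both are equalities to leading order as $x\to0$, so they need a genuine argument. The paper uses the bounds $\mathcal L\le1-\sech x$, $2\mathcal L\cosh x\ge x^2$ and $2\mathcal L\ge\tanh^2x$. For the second inequality it uses a single sign change of the $r$-derivative. For the first it uses the decrease-then-increase shape of $y\mapsto\sinh^2y/(y^2\cosh y)$. None of this appears in your proposal, so as written it is a framework rather than a proof.
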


We prove the lemma in three elementary steps.  Define
\begin{equation}\label{eq:L-definition}
  \mathcal L(x):=x\tanh x-\log\cosh x,
  \qquad x\ge0.
\end{equation}
Then
\begin{equation}\label{eq:L-derivative}
  \mathcal L(0)=0,
  \qquad
  \mathcal L'(x)=x\sech^2x\ge0.
\end{equation}

\begin{lemma}[Three bounds for $\mathcal L$]\label{lem:L-bounds}
For every $x\ge0$,
\begin{align}
  \mathcal L(x)&\le1-\sech x,\label{eq:L-upper}\\
  2\mathcal L(x)\cosh x&\ge x^2,\label{eq:L-lower-one}\\
  2\mathcal L(x)&\ge\tanh^2x.\label{eq:L-lower-two}
\end{align}
\end{lemma}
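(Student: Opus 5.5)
All three bounds have the same shape: an inequality between two functions that both vanish at $x=0$. The plan is to compare derivatives, or, where that does not close directly, to reduce to a simpler auxiliary inequality. Throughout we use \eqref{eq:L-derivative}: $\mathcal L(0)=0$ and $\mathcal L'(x)=x\sech^2x$.

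For \eqref{eq:L-upper}, set $u(x)=1-\sech x-\mathcal L(x)$, so $u(0)=0$. Its derivative is
\[
u'(x)=\sech x\tanh x-x\sech^2x=\sech^2x\,(\sinh x-x)\ge0 .
\]
This is nonnegative because $\sinh x\ge x$ for $x\ge0$, so $u\ge0$.

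For \eqref{eq:L-lower-two}, set $v(x)=2\mathcal L(x)-\tanh^2x$, so again $v(0)=0$. Then
\[
v'(x)=2x\sech^2x-2\tanh x\sech^2x=2\sech^2x\,(x-\tanh x)\ge0,
\]
since $\tanh x\le x$. Both of these are one-line monotonicity arguments.

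The main step is \eqref{eq:L-lower-one}, where the $\cosh x$ factor couples the two sides. Put $g(x)=2\mathcal L(x)\cosh x-x^2$, so $g(0)=0$. Differentiating gives
\[
g'(x)=2x\sech x+2\mathcal L(x)\sinh x-2x .
\]
We need $g'\ge0$, that is,
\[
\mathcal L(x)\sinh x\ge x(1-\sech x)=x\,\frac{\cosh x-1}{\cosh x}.
\]
Equivalently, $\mathcal L(x)\ge x(\cosh x-1)/(\sinh x\cosh x)=x\tanh(x/2)/\cosh x$, since $(\cosh x-1)/\sinh x=\tanh(x/2)$.

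Call the right-hand side $r(x)$; note $r(0)=0$. I would prove $\mathcal L\ge r$ by comparing derivatives once more. The required inequality is
\[
x\sech^2x\ \ge\ r'(x)=\frac{\tanh(x/2)}{\cosh x}+\frac{x}{2\cosh^2(x/2)\cosh x}-\frac{x\tanh(x/2)\tanh x}{\cosh x}.
\]
Multiply through by $\cosh^2x$ and use $\tanh(x/2)\tanh x=(\cosh x-1)/\cosh x$ together with $2\cosh^2(x/2)=1+\cosh x$. The inequality becomes
\[
x\ \ge\ \tanh(x/2)\cosh x+\frac{x\cosh x}{1+\cosh x}-x(\cosh x-1).
\]
Writing $c=\cosh x$ and $\tanh(x/2)=\sinh x/(1+c)$, this is $x\bigl(c-\tfrac{c}{1+c}\bigr)\ge \tfrac{c\sinh x}{1+c}$, that is, $x\cdot\tfrac{c^2}{1+c}\ge\tfrac{c\sinh x}{1+c}$. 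This reduces to
\[
x\cosh x\ge\sinh x ,
\]
which holds because $(x\cosh x-\sinh x)'=x\sinh x\ge0$.

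The risk in this step is an algebra slip in the reduction. If the chain fails, the fallback is to compare Taylor series directly. Both sides of \eqref{eq:L-lower-one} equal $x^2+O(x^4)$, because $\mathcal L(x)=x^2/2-x^4/4+\dots$ and $\cosh x=1+x^2/2+\dots$. One would then argue coefficientwise, or via the auxiliary inequality $\tanh(x/2)\le x/2$.
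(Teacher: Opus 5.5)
Your proposal is correct. Your arguments for \eqref{eq:L-upper} and \eqref{eq:L-lower-two} are exactly the paper's.

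For \eqref{eq:L-lower-one} you take a slightly different route. You differentiate the unnormalized difference $g=2\mathcal L(x)\cosh x-x^2$. Its derivative, $g'(x)=2\sinh x\,\bigl(\mathcal L(x)-r(x)\bigr)$ with $r(x)=x\tanh(x/2)\sech x$, still contains $\mathcal L$. This forces a second monotonicity step, $\mathcal L\ge r$. Your reduction of $\mathcal L'\ge r'$ to $x\cosh x\ge\sinh x$ checks out; the identities $2\cosh^2(x/2)=1+\cosh x$ and $\tanh(x/2)\tanh x=(\cosh x-1)/\cosh x$ are used correctly.

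The paper instead divides by $2\cosh x$ before differentiating and studies $F=\mathcal L-x^2/(2\cosh x)$. Because $\mathcal L'$ is explicit, $F'(x)=x\sech^2x\,\bigl(1-\cosh x+\tfrac x2\sinh x\bigr)$ contains no $\mathcal L$. A single step then closes, using $\cosh x-1=\sinh x\tanh(x/2)\le\tfrac x2\sinh x$.

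The paper's normalization avoids the auxiliary function $r$ and the longer algebra. Your route needs no choice of normalization and is more mechanical, at the cost of one extra layer of differentiation. The two closing facts, $\tanh(x/2)\le x/2$ and $\tanh x\le x$, are the same elementary inequality at different scales.
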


\begin{proof}
For \eqref{eq:L-upper},
\[
  \frac{\dd}{\dd x}\bigl(1-\sech x-\mathcal L(x)\bigr)
  =\frac{\sinh x-x}{\cosh^2x}\ge0,
\]
and the difference vanishes at zero.  For \eqref{eq:L-lower-one}, set
\[
  F(x):=\mathcal L(x)-\frac{x^2}{2\cosh x}.
\]
Then
\begin{align*}
  F'(x)
  &=x\sech^2x-\frac{x}{\cosh x}
    +\frac{x^2\sinh x}{2\cosh^2x}\\
  &=x\sech^2x
    \left(1-\cosh x+\frac{x}{2}\sinh x\right).
\end{align*}
Since
\[
  \cosh x-1=\sinh x\tanh(x/2)
  \quad\text{and}\quad
  x/2\ge\tanh(x/2),
\]
we have $F'(x)\ge0$, while $F(0)=0$.  Finally,
\[
  \frac{\dd}{\dd x}\bigl(2\mathcal L(x)-\tanh^2x\bigr)
  =2\sech^2x\,(x-\tanh x)\ge0,
\]
and this difference also vanishes at zero.
\end{proof}

\begin{lemma}[Two hyperbolic inequalities]\label{lem:two-hyperbolic}
For $x\ge0$ and $0\le r\le1$,
\begin{align}
  1-\frac{\cosh(rx)}{\cosh x}
  &\ge(1-r^2)\mathcal L(x),\label{eq:hyp-one}\\
  2r^2\mathcal L(x)\cosh(rx)\cosh x
  &\ge\sinh^2(rx).\label{eq:hyp-two}
\end{align}
\end{lemma}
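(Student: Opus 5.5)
For \eqref{eq:hyp-one} I would pass to the variable $u=r^2\in[0,1]$. The power series
\[
  \cosh(\sqrt u\,x)=\sum_{k\ge0}\frac{u^kx^{2k}}{(2k)!}
\]
has nonnegative coefficients, so it is convex in $u$. Hence
\[
  g(u):=1-\frac{\cosh(\sqrt u\,x)}{\cosh x}-(1-u)\mathcal L(x)
\]
is a concave function minus an affine function, and is therefore concave on $[0,1]$. A concave function on an interval lies above its chord, so it suffices to check the endpoints. At $u=1$ we get $g(1)=0$. At $u=0$ the claim $g(0)\ge0$ reads $1-\sech x\ge\mathcal L(x)$, which is exactly \eqref{eq:L-upper} of \Cref{lem:L-bounds}. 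This disposes of \eqref{eq:hyp-one}.

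For \eqref{eq:hyp-two} the case $r=0$ or $x=0$ is trivial. Otherwise divide by $r^2\cosh(rx)$ and set $y=rx\in[0,x]$. The inequality becomes
\[
  2\mathcal L(x)\cosh x\ \ge\ x^2\varphi(y),
  \qquad
  \varphi(y):=\frac{\sinh^2y}{y^2\cosh y}=\frac{\sinh y}{y}\cdot\frac{\tanh y}{y},
\]
with $\varphi(0)=1$. The two lower bounds of \Cref{lem:L-bounds} handle exactly the two endpoints of $[0,x]$:
\begin{itemize}
\item \eqref{eq:L-lower-one} gives $2\mathcal L(x)\cosh x\ge x^2=x^2\varphi(0)$;
\item \eqref{eq:L-lower-two}, multiplied by $\cosh x$, gives $2\mathcal L(x)\cosh x\ge \sinh^2x/\cosh x=x^2\varphi(x)$.
\end{itemize}
It therefore suffices to show that $\max_{[0,x]}\varphi=\max(\varphi(0),\varphi(x))$. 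This holds, for instance, if $\varphi$ is quasiconvex on $[0,\infty)$, that is, nonincreasing and then nondecreasing. Monotonicity alone cannot work: the expansion $\varphi(y)=1-y^2/6+O(y^4)$ shows $\varphi$ first decreases, while $\varphi(y)\to\infty$ as $y\to\infty$.

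The main obstacle is proving this single-dip shape. I would work with the logarithmic derivative
\[
  d(y):=(\log\varphi)'(y)=\frac{2}{\sinh 2y}+\coth y-\frac2y,
\]
which is negative near $0$ and tends to $1$ at infinity. The first attempt is to show $d'(y)>0$ for $y>0$, i.e.\ that $\log\varphi$ is convex. Then $d$ has a single zero and quasiconvexity follows. If $d'>0$ is messy to verify directly, I would instead clear denominators and show that
\[
  y\sinh 2y\,d(y)=2y+2y\cosh^2y-2\sinh 2y
\]
has a single sign change on $(0,\infty)$. One route is to differentiate it a couple of times, or to compare Taylor coefficients, the aim being to exhibit the coefficients as changing sign only once.

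Should the quasiconvexity route fail, the fallback is to prove the stronger pointwise bound $2\mathcal L(x)\cosh x\ge x^2\max(1,\varphi(x))$ directly. One would then check that $x^2\varphi(y)\le\max\bigl(x^2,\ x^2\varphi(x)\bigr)$ by comparing $\varphi(y)$ with the chord joining $(0,1)$ and $(x,\varphi(x))$. For that comparison, convexity of $\varphi$ itself in $y$ would already suffice, and convexity can be checked from the power series of $\sinh^2y$ and $\sech y$.
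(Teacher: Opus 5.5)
\textbf{The crux of \eqref{eq:hyp-two}, the single-dip shape of $\varphi$, is not proved, and two of your three proposed mechanisms for it are false.}

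Your proof of \eqref{eq:hyp-one} is correct and is the paper's argument in other clothing. The paper's observation that $F_x'(r)/r$ is increasing says exactly that $F_x$ is convex in $u=r^2$, which you get from the nonnegative power series of $\cosh(\sqrt u\,x)$. For \eqref{eq:hyp-two}, your reduction is also exactly the paper's (its $h$ is your $\varphi$): divide by $r^2\cosh(rx)$, put $y=rx$, handle the endpoints $y=0$ and $y=x$ with \eqref{eq:L-lower-one} and \eqref{eq:L-lower-two}, and reduce to $\max_{[0,x]}\varphi=\max\{\varphi(0),\varphi(x)\}$.

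Your own expansion $\varphi(y)=1-y^2/6+O(y^4)$ rules out the first and third mechanisms. It gives $(\log\varphi)''(0)=\varphi''(0)=-1/3$.
\begin{itemize}
\item The claim $d'>0$ fails near $0$. Indeed $d(0^+)=0$ together with $d<0$ near $0$ already forces $d'<0$ somewhere; numerically $d'(1/2)\approx-0.15$. So $\log\varphi$ is not convex.
\item $\varphi$ is strictly concave near $0$, so for small $x$ it lies strictly above the chord from $(0,1)$ to $(x,\varphi(x))$. The fallback's chord comparison is therefore false, not merely hard to prove. The ``stronger pointwise bound'' there is also just the two endpoint inequalities again.
\end{itemize}

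The only viable route is your middle one. Note that $2y+2y\cosh^2y-2\sinh 2y=2N(y)$ with $N$ as in \eqref{eq:N-definition}. The paper shows that $N$ changes sign once by differentiating: $N'=\sinh y\,J(y)$ with $J=2y\cosh y-3\sinh y$, $J''>0$, and $N\to\infty$. Your Taylor idea closes it even faster:
\[
  2y+2y\cosh^2y-2\sinh 2y=\sum_{k\ge1}\frac{2^{2k}(2k-3)}{(2k+1)!}\,y^{2k+1}
  =-\frac23\,y^3+\frac2{15}\,y^5+\cdots.
\]
After division by $y^3$ this is a strictly increasing function rising from $-2/3$ to $+\infty$. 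It has exactly one zero, where it passes from negative to positive, and that is precisely the quasiconvexity of $\varphi$ you need. Until this (or the paper's $J$-argument) is actually carried out, \eqref{eq:hyp-two} is not established.
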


\begin{proof}
For \eqref{eq:hyp-one}, define
\[
  F_x(r):=\frac{\cosh(rx)}{\cosh x}+(1-r^2)\mathcal L(x),
  \qquad 0\le r\le1.
\]
For $r>0$,
\begin{equation}\label{eq:F-prime-over-r}
  \frac{F_x'(r)}r
  =\frac{x}{\cosh x}\frac{\sinh(rx)}r-2\mathcal L(x).
\end{equation}
The map $r\mapsto\sinh(rx)/r$ is increasing, because
\[
  \frac{\dd}{\dd r}\frac{\sinh(rx)}r
  =\frac{rx\cosh(rx)-\sinh(rx)}{r^2}\ge0.
\]
Thus the right side of \eqref{eq:F-prime-over-r} is increasing.  The sign of
$F_x'$ can change at most once, and only from negative to positive, so the
maximum of $F_x$ on $[0,1]$ occurs at an endpoint.  Now
\[
  F_x(1)=1,
  \qquad
  F_x(0)=\sech x+\mathcal L(x)\le1
\]
by \eqref{eq:L-upper}.  Hence $F_x(r)\le1$, which is
\eqref{eq:hyp-one}.

For \eqref{eq:hyp-two}, the cases $x=0$ or $r=0$ are equalities.  Assume
$x,r>0$ and define
\begin{equation}\label{eq:h-definition}
  h(y):=\frac{\sinh^2y}{y^2\cosh y},
  \qquad y>0,
  \qquad h(0):=1.
\end{equation}
The sign of $h'(y)$ is the sign of
\begin{equation}\label{eq:N-definition}
  N(y):=y(2+\sinh^2y)-2\sinh y\cosh y,
\end{equation}
because
\[
  \frac{h'(y)}{h(y)}=2\coth y-\frac2y-\tanh y.
\]
Moreover,
\begin{align}
  N'(y)&=\sinh y\,J(y),\label{eq:N-prime}\\
  J(y)&=2y\cosh y-3\sinh y,\\
  J'(y)&=2y\sinh y-\cosh y,\\
  J''(y)&=\sinh y+2y\cosh y>0 \qquad(y>0).
\end{align}
Since $J''>0$, the derivative $J'$ increases from $-1$ to $\infty$ and has
exactly one zero.  Hence $J$ first decreases from $J(0)=0$ and then increases
to $\infty$, so it too has exactly one positive zero.  By
\eqref{eq:N-prime}, $N$ first decreases from $N(0)=0$ and then increases.
Moreover,
\[
  N(y)\sim\frac14(y-2)e^{2y}\longrightarrow\infty,
\]
so $N$ has exactly one positive zero.  Consequently $h$ first decreases and
then increases.  Its maximum on $[0,x]$ is therefore attained at an endpoint,
and
\begin{equation}\label{eq:h-endpoint}
  \frac{\sinh^2(rx)}{r^2\cosh(rx)}
  \le
  \max\left\{x^2,\frac{\sinh^2x}{\cosh x}\right\}.
\end{equation}
By \eqref{eq:L-lower-one} and \eqref{eq:L-lower-two},
\[
  2\mathcal L(x)\cosh x\ge x^2,
  \qquad
  2\mathcal L(x)\cosh x\ge\frac{\sinh^2x}{\cosh x}.
\]
Combining these inequalities with \eqref{eq:h-endpoint}, and multiplying by
$r^2\cosh(rx)$, proves \eqref{eq:hyp-two}.
\end{proof}

\begin{proof}[Proof of Lemma \ref{lem:nonlinear-K}]
First suppose $w\ne\pm1$.  Choose a branch of the logarithm and write
\begin{equation}\label{eq:tanh-strip}
  \xi=a+i\vartheta:=\frac12\log\frac{1+w}{1-w},
  \qquad
  w=\tanh\xi.
\end{equation}
The real part $a$ does not depend on the branch. Indeed,  changing the
logarithmic branch replaces $\vartheta$ by
$\vartheta+\pi k$, $k\in\mathbb Z$; hence all subsequent expressions
involving $\cos(2\vartheta)$ and $\sin(2\vartheta)$ are
branch-independent. Since
\[
  1+\tanh\xi=\frac{e^\xi}{\cosh\xi},
  \qquad
  1-\tanh\xi=\frac{e^{-\xi}}{\cosh\xi},
\]
we obtain the exact strip formula
\begin{equation}\label{eq:H-strip}
  H_s(\tanh\xi)
  =\frac1s\log\cosh(sa)-\log\abs{\cosh\xi}.
\end{equation}
Since $G_s=2\partial_{\overline w}H_s$, differentiating
\eqref{eq:H-strip} with respect to $\overline w$, with $s$ fixed,
gives 
\begin{equation}\label{eq:G-strip}
  G_s(w)=\frac{\tanh(sa)-\overline w}{1-\overline w^{\,2}}.
\end{equation}
Indeed,
\[
  \frac{\partial a}{\partial\overline w}
  =\frac1{2(1-\overline w^{\,2})},
  \qquad
  2\partial_{\overline w}\log\abs{\cosh\xi}
  =\frac{\overline w}{1-\overline w^{\,2}}.
\]
Multiplying \eqref{eq:G-strip} by $\overline w$ and using the hyperbolic
addition formulas yields
\begin{equation}\label{eq:B-strip}
  \overline wG_s(w)
  =\frac12\left(
  1-\frac{\cosh((s-2)a+2i\vartheta)}{\cosh(sa)}
  \right).
\end{equation}
For clarity, this last identity follows from
\begin{align*}
  \frac{\tanh u\,[\tanh v-\tanh u]}{1-\tanh^2u}
  &=\frac{\sinh u\sinh(v-u)}{\cosh v},\\
  2\sinh u\sinh(v-u)&=\cosh v-\cosh(v-2u),
\end{align*}
with $u=\overline\xi$ and $v=sa$.

Because $H_s$ is even, $K_s(-w)=K_s(w)$; replacing $w$ by $-w$ if
necessary, assume $a\ge0$.  Put
\begin{equation}\label{eq:x-r-CS-gamma}
\begin{aligned}
  x&:=sa,
  &r&:=\frac{s-2}{s}\in(0,1),
  &\gamma&:=\cos(2\vartheta),\\
  C&:=\frac{\cosh(rx)}{\cosh x},
  &S&:=\frac{\sinh(rx)}{\cosh x}.
\end{aligned}
\end{equation}
Since $s=2/(1-r)$, differentiation of \eqref{eq:H-strip} in $s$, followed
by taking real and imaginary parts in \eqref{eq:B-strip}, gives
\begin{align}
  \mathcal E_s(w)&=\frac{(1-r)^2}{4}\mathcal L(x),\label{eq:E-formula}\\
  \mathcal R_s(w)&=\frac12(1-C\gamma),\label{eq:R-formula}\\
  \mathcal I_s(w)&=-\frac12S\sin(2\vartheta),\label{eq:I-formula}\\
  \frac{\mathcal R_s(w)}2-(s-1)\mathcal E_s(w)
  &=\frac14\left(1-C\gamma-(1-r^2)\mathcal L(x)\right).
  \label{eq:K22-formula}
\end{align}
If $x=0$, then $\mathcal E_s=\mathcal I_s=0$ and
$\mathcal R_s=(1-\cos(2\vartheta))/2\ge0$, so $K_s(w)\succeq0$.
Assume $x>0$.  The upper-left entry of $K_s(w)$ is then positive.  Since
$s-2=2r/(1-r)$, equations
\eqref{eq:E-formula}--\eqref{eq:K22-formula} give, entry by entry,
\begin{align*}
  K_{11}&=\frac{(1-r)^2}{4}\mathcal L(x),\\
  K_{22}&=\frac14
  \left(1-C\gamma-(1-r^2)\mathcal L(x)\right),\\
  K_{12}&=\frac{(1-r)S\sin(2\vartheta)}{8r}.
\end{align*}
Therefore
\begin{align*}
  \det K_s(w)
  &=\frac{(1-r)^2}{16}\mathcal L(x)
    \left(1-C\gamma-(1-r^2)\mathcal L(x)\right)\\
  &\quad-
    \frac{(1-r)^2S^2\sin^2(2\vartheta)}{64r^2}\\
  &=\frac{(1-r)^2}{64r^2}\,\mathcal Q(\gamma),
\end{align*}
that is,
\begin{equation}\label{eq:detK}
  \det K_s(w)=\frac{(1-r)^2}{64r^2}\,\mathcal Q(\gamma),
\end{equation}
where
\begin{equation}\label{eq:Q-gamma}
  \mathcal Q(\gamma)
  :=4r^2\mathcal L(x)
  \left(1-(1-r^2)\mathcal L(x)-C\gamma\right)
  -S^2(1-\gamma^2).
\end{equation}
This is a quadratic polynomial in $\gamma$ with positive leading coefficient
$S^2$ and vertex
\begin{equation}\label{eq:gamma-star}
  \gamma_*
  =\frac{2r^2\mathcal L(x)C}{S^2}
  =\frac{2r^2\mathcal L(x)\cosh(rx)\cosh x}{\sinh^2(rx)}.
\end{equation}
By \eqref{eq:hyp-two}, $\gamma_*\ge1$.  Hence $\mathcal Q$ is
nonincreasing on $[-1,1]$, and its minimum there is attained at $\gamma=1$.
At that point,
\[
  \mathcal Q(1)
  =4r^2\mathcal L(x)
  \left(1-C-(1-r^2)\mathcal L(x)\right)\ge0
\]
by \eqref{eq:hyp-one}.  Thus $\det K_s(w)\ge0$.  Since $K_{11}>0$,
we also have $K_{22}\ge K_{12}^2/K_{11}\ge0$, and therefore
$K_s(w)\succeq0$.  The points $w=\pm1$ follow by continuity.
\end{proof}

\begin{proof}[Proof of \Cref{thm:infinitesimal}]
Let $\lambda=-c/2+i\beta$.  Since the directional derivative of a real
function with complex gradient $G_s$ in direction $\delta w$ is
$\Rea(G_s(w)\overline{\delta w})$, equations \eqref{eq:ERI} give
\begin{equation}\label{eq:directional-derivative}
  \left.\frac{\dd}{\dd t}\right|_{t=0}
  H_{s+\kappa t}(e^{\lambda t}w)
  =\kappa\mathcal E_s(w)-\frac c2\mathcal R_s(w)
   +\beta\mathcal I_s(w).
\end{equation}
A direct multiplication of \eqref{eq:K-matrix} and \eqref{eq:X-matrix}
shows that
\begin{align}
  \tr\bigl(K_s(w)X_s(c,\beta,\kappa)\bigr)
  &=\frac c2\mathcal R_s(w)-\kappa\mathcal E_s(w)
    -\beta\mathcal I_s(w)\notag\\
  &=-\left.\frac{\dd}{\dd t}\right|_{t=0}
  H_{s+\kappa t}(e^{\lambda t}w).\label{eq:trace-pairing}
\end{align}
Both matrices in the trace pairing are positive semidefinite, so
\[
  \tr(K_sX_s)=\tr(K_s^{1/2}X_sK_s^{1/2})\ge0.
\]
Equation \eqref{eq:trace-pairing} proves \eqref{eq:inf-derivative}.
\end{proof}

We shall use the following immediate pathwise form.  Let
$s\in C^1(I;(2,\infty))$ be nondecreasing, let
$\mu\in C^1(I;\C\setminus\{0\})$, and write
\begin{equation}\label{eq:path-log-derivative}
  \frac{\mu'(t)}{\mu(t)}=-\frac{c(t)}2+i\beta(t),
  \qquad
  \kappa(t):=s'(t).
\end{equation}
If
\begin{equation}\label{eq:path-cone-condition}
  X_{s(t)}\bigl(c(t),\beta(t),\kappa(t)\bigr)\succeq0
  \qquad(t\in I),
\end{equation}
then \Cref{thm:infinitesimal} and the chain rule show that, for every fixed
$w\in\C$,
\begin{equation}\label{eq:pathwise-monotonicity}
  t\longmapsto H_{s(t)}(\mu(t)w)
  \quad\text{is nonincreasing on }I.
\end{equation}

\subsection{Step 3: exact factorization of the radial boundary}

Fix $1<P<Q$ as in \eqref{eq:PQ-definition}, and define
\begin{equation}\label{eq:mP-definition}
  m_P:=\sinh\tau_P=\frac{P-1}{2\sqrt P}.
\end{equation}
Choose $m\in[0,m_P]$, put
\begin{equation}\label{eq:alpha-definition}
  \alpha:=\arsinh m\in[0,\tau_P],
\end{equation}
and, for every $R\ge P$, define $\chi_R\ge0$ by
\begin{equation}\label{eq:chi-definition}
  \cosh\tau_R=\cosh\alpha\cosh\chi_R.
\end{equation}
Next define $\phi_R\in[0,\pi/2]$ by
\begin{align}
  \sinh\tau_R\cos\phi_R
  &=\sinh\alpha\cosh\chi_R=m\cosh\chi_R,
  \label{eq:phi-cos}\\
  \sinh\tau_R\sin\phi_R&=\sinh\chi_R.
  \label{eq:phi-sin}
\end{align}
These equations are compatible because
\[
  m^2\cosh^2\chi_R+\sinh^2\chi_R
  =(1+m^2)\cosh^2\chi_R-1
  =\sinh^2\tau_R.
\]
Finally, let
\begin{equation}\label{eq:dR-definition}
  d_R:=\sqrt{(R-1)^2-4Rm^2}.
\end{equation}

We shall repeatedly use the identities
\begin{equation}\label{eq:dR-hyperbolic}
  d_R=2\sqrt R\cosh\alpha\sinh\chi_R
\end{equation}
and, whenever $d_R>0$,
\begin{align}
  \chi_R'&=\frac{R-1}{2Rd_R},\label{eq:chi-derivative}\\
  \phi_R'&=\frac{2m}{(R-1)d_R}.
  \label{eq:phi-derivative}
\end{align}
Here \eqref{eq:phi-derivative} is interpreted as $0$ when $m=0$.
To prove \eqref{eq:dR-hyperbolic}, square \eqref{eq:chi-definition} and use
$\cosh^2\alpha=1+m^2$:
\[
  \cosh^2\alpha\sinh^2\chi_R
  =\sinh^2\tau_R-m^2
  =\frac{(R-1)^2-4Rm^2}{4R}.
\]
Differentiating \eqref{eq:chi-definition}, using
$\tau_R'=1/(2R)$ and \eqref{eq:dR-hyperbolic}, gives
\eqref{eq:chi-derivative}.  If $m>0$, division of
\eqref{eq:phi-sin} by \eqref{eq:phi-cos} gives
\begin{equation}\label{eq:tan-phi}
  \tan\phi_R=\frac{\tanh\chi_R}{m}.
\end{equation}
Differentiation, followed by \eqref{eq:chi-derivative} and
$\sinh^2\tau_R=(R-1)^2/(4R)$, gives \eqref{eq:phi-derivative}.

Since
\[
  \frac{R-1}{2\sqrt R}=\sinh\tau_R
\]
is strictly increasing for $R>1$, the inequality
$m\le m_P=\sinh\tau_P$ implies $d_R\ge0$, with equality only when
$m=m_P$ and $R=P$. Hence, if $m<m_P$, then $d_R>0$ for every
$R\ge P$. If $m=m_P$, the same is true for $R>P$, while $d_P=0$.  At the endpoints of the $m$-interval,
\begin{align}
  m=0&:\qquad \chi_R=\tau_R,
  \quad \phi_R=\frac\pi2,\label{eq:m-zero-endpoint}\\
  m=m_P&:\qquad \chi_P=0,
  \quad \phi_P=0.\label{eq:mP-endpoint}
\end{align}

Define two boundary branches by
\begin{align}
  \theta_\sigma(m)&:=\frac{\phi_Q+\sigma\phi_P}{2},
  \qquad \sigma\in\{-1,+1\},\label{eq:theta-sigma}\\
  L_\sigma(m)&:=\frac14\log\frac QP
  +\frac{\chi_Q+\sigma\chi_P}{2},\label{eq:L-sigma}\\
  z_\sigma(m)&:=e^{-L_\sigma(m)}e^{i\theta_\sigma(m)}.
  \label{eq:z-sigma}
\end{align}

\begin{proposition}[Boundary parametrization]\label{prop:boundary-parametrization}
Every $z_\sigma(m)$ lies on the radial boundary of $\Om_{p,q}$.  As
$m$ ranges over $[0,m_P]$ and $\sigma$ over $\{-1,+1\}$, the two families
cover the entire radial boundary in the first quadrant.
\end{proposition}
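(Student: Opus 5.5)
The plan is to verify directly that each $z_\sigma(m)$ satisfies the boundary quadratic \eqref{eq:boundary-quadratic} and is its smaller root. The covering statement will then follow from continuity in $m$ together with the uniqueness part of \Cref{lem:radial-boundary}.

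\emph{Step 1: $z_\sigma(m)$ solves \eqref{eq:boundary-quadratic}.} Put $y=\rho^2=e^{-2L_\sigma}=\sqrt{P/Q}\,e^{-u}$, where $u:=\chi_Q+\sigma\chi_P$. Dividing \eqref{eq:boundary-quadratic} by $\sqrt{PQ}\,y$ turns it into
\[
  \sqrt{Q/P}\,y+\sqrt{P/Q}\,y^{-1}=\frac{A_{P,Q}(\theta)}{\sqrt{PQ}}.
\]
With our choice of $y$, the left side equals $e^{-u}+e^{u}=2\cosh u$. By \eqref{eq:A-hyperbolic} with $2\theta_\sigma=\phi_Q+\sigma\phi_P$, the right side equals
\[
  2\bigl(\cosh\tau_P\cosh\tau_Q-\sinh\tau_P\sinh\tau_Q\cos(\phi_Q+\sigma\phi_P)\bigr).
\]
We expand each term separately.

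\begin{itemize}
\item By \eqref{eq:chi-definition}, $\cosh\tau_P\cosh\tau_Q=\cosh^2\alpha\,\cosh\chi_P\cosh\chi_Q$.
\item Expanding the cosine of the sum and using \eqref{eq:phi-cos}--\eqref{eq:phi-sin}, we get $\sinh\tau_P\sinh\tau_Q\cos(\phi_Q+\sigma\phi_P)=m^2\cosh\chi_P\cosh\chi_Q-\sigma\sinh\chi_P\sinh\chi_Q$.
\end{itemize}

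Since $\cosh^2\alpha-m^2=1$, the difference of these two terms is $\cosh\chi_P\cosh\chi_Q+\sigma\sinh\chi_P\sinh\chi_Q=\cosh u$. This is exactly the required identity.

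\emph{Step 2: it is the smaller root.} The two roots of \eqref{eq:boundary-quadratic} have product $P/Q$, and \Cref{lem:radial-boundary} says they are distinct. Hence the smaller root is the one with $y<\sqrt{P/Q}$, which here means $u>0$ (or $u\ge0$ in the degenerate comparison). For $\sigma=+1$ this is clear. For $\sigma=-1$ we need $\chi_Q\ge\chi_P$. This holds because $\cosh\chi_R=\cosh\tau_R/\cosh\alpha$ and $\tau_R$ is increasing in $R$. By the uniqueness statement of \Cref{lem:radial-boundary}, $z_\sigma(m)$ is then the radial boundary point $\rho_*(\theta_\sigma)e^{i\theta_\sigma}$.

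The degenerate case is $\sigma=-1$ with $\chi_Q=\chi_P$, i.e.\ $u=0$. This would force $P=Q$, which is excluded, so strictness is automatic. I would still double-check the case $m=m_P$: there $\chi_P=0$, so both branches give the same $u=\chi_Q>0$.

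\emph{Step 3: covering.} First we check that $\theta_\sigma(m)\in[0,\pi/2]$. Since $\phi_R\in[0,\pi/2]$, the only issue is $\theta_{-1}\ge0$, i.e.\ $\phi_Q\ge\phi_P$. For $m>0$ this follows from \eqref{eq:tan-phi}, since $\tanh\chi_R$ is increasing in $R$. For $m=0$ we have $\phi_R\equiv\pi/2$.

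The maps $m\mapsto\theta_\pm(m)$ are continuous on $[0,m_P]$, because $\chi_R$, $\phi_R$ depend continuously on $m$ through \eqref{eq:chi-definition}--\eqref{eq:phi-sin}. The endpoint values are as follows.
\begin{itemize}
\item At $m=0$, by \eqref{eq:m-zero-endpoint}, $\theta_+=\pi/2$ and $\theta_-=0$.
\item At $m=m_P$, by \eqref{eq:mP-endpoint}, $\theta_+=\theta_-=\phi_Q/2$.
\end{itemize}
By the intermediate value theorem, the range of $\theta_+$ contains $[\phi_Q(m_P)/2,\pi/2]$ and the range of $\theta_-$ contains $[0,\phi_Q(m_P)/2]$. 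Together they cover $[0,\pi/2]$. Every radial boundary point in the first quadrant is therefore some $z_\sigma(m)$, again by uniqueness of the radial boundary point in each direction.

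The main obstacle is the bookkeeping in Step 1, namely getting the sign $\sigma$ consistently through the cosine expansion. The remaining point needing care is continuity at $m=m_P$, where $d_P=0$. There I would use \eqref{eq:chi-definition} and \eqref{eq:tan-phi} directly rather than the derivative formulas.
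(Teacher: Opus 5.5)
Your proof is correct, and its overall structure matches the paper's. Step~1 is exactly the paper's computation: the addition identity \eqref{eq:addition-key} followed by $Qy_\sigma+P/y_\sigma=2\sqrt{PQ}\cosh(\chi_Q+\sigma\chi_P)=A_{P,Q}(\theta_\sigma)$. Your covering step is the paper's intermediate-value argument. The only variation there is that you obtain $\phi_Q\ge\phi_P$ from \eqref{eq:tan-phi} and monotonicity of $\chi_R$, instead of from the derivative formula \eqref{eq:phi-derivative}.

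The genuine difference is in how you identify the smaller root. The paper argues by continuity. Both branches start on the smaller root at $m=0$, and by \Cref{lem:radial-boundary} the two roots never collide. Since $y_\sigma(m)$ is a continuous root along the branch, it stays the smaller one. This tacitly uses continuity of $m\mapsto\theta_\sigma(m)$, which the paper only establishes afterwards, in the coverage part.

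You instead argue pointwise via the product of the roots. The roots of \eqref{eq:boundary-quadratic} have product $P/Q$, so a root is the smaller one exactly when it lies below $\sqrt{P/Q}$, i.e. when $u=\chi_Q+\sigma\chi_P>0$. That inequality follows from the strict monotonicity of $R\mapsto\chi_R$, read off from $\cosh\chi_R=\cosh\tau_R/\cosh\alpha$, together with $\chi_P\ge0$.

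Your route is shorter and needs no connectedness argument or base case at $m=0$. It does not even need distinctness of the roots: if $y<\sqrt{P/Q}$ is a root, the other root $P/(Qy)$ is automatically larger. The paper's continuity argument is softer and would survive without an explicit relation between the roots. Here, though, the explicit parametrization makes your pointwise argument the cleaner one.
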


\begin{proof}
First we verify the boundary equation.  From
\eqref{eq:chi-definition}--\eqref{eq:phi-sin}, for either sign $\sigma$,
\begin{align}
 &\cosh\tau_P\cosh\tau_Q
 -\sinh\tau_P\sinh\tau_Q
  \cos(\phi_Q+\sigma\phi_P)\notag\\
 &\hspace{8em}=\cosh(\chi_Q+\sigma\chi_P).
 \label{eq:addition-key}
\end{align}
Indeed,
\begin{align*}
  \cosh\tau_P\cosh\tau_Q
  &=(1+m^2)\cosh\chi_P\cosh\chi_Q,\\
  \sinh\tau_P\sinh\tau_Q
  \cos(\phi_Q+\sigma\phi_P)
  &=m^2\cosh\chi_P\cosh\chi_Q
    -\sigma\sinh\chi_P\sinh\chi_Q.
\end{align*}
Subtracting proves \eqref{eq:addition-key}.

Put
\begin{equation}\label{eq:y-sigma}
  y_\sigma:=\abs{z_\sigma(m)}^2
  =e^{-2L_\sigma(m)}
  =\sqrt{\frac PQ}\,e^{-(\chi_Q+\sigma\chi_P)}.
\end{equation}
Equations \eqref{eq:A-hyperbolic}, \eqref{eq:theta-sigma}, and
\eqref{eq:addition-key} imply
\begin{align}
  Qy_\sigma+\frac P{y_\sigma}
  &=2\sqrt{PQ}\cosh(\chi_Q+\sigma\chi_P)\notag\\
  &=A_{P,Q}(\theta_\sigma).
  \label{eq:root-verification}
\end{align}
Multiplication by $y_\sigma$ shows that $y_\sigma$ solves
\eqref{eq:boundary-quadratic}.

We next identify the smaller root.  At $m=0$, equations
\eqref{eq:m-zero-endpoint}--\eqref{eq:z-sigma} give
\begin{align*}
  \theta_-(0)&=0,& y_-(0)&=\frac PQ,\\
  \theta_+(0)&=\frac\pi2,& y_+(0)&=\frac1Q.
\end{align*}
These are the smaller roots on the positive real and positive imaginary axes.
By Lemma \ref{lem:radial-boundary}, the two roots are distinct at every angle and
depend continuously on the angle.  Each $y_\sigma(m)$ is a continuous root
and begins on the smaller root, so it remains the smaller root throughout its
branch.  Thus $z_\sigma(m)$ lies on the radial boundary.

It remains to prove coverage.  At $m=0$,
\[
  \theta_-(0)=0,
  \qquad
  \theta_+(0)=\frac\pi2.
\]
At $m=m_P$, one has $\chi_P=\phi_P=0$, so the branches meet at
\[
  \theta_*:=\frac{\phi_Q(m_P)}2.
\]
For fixed $m$, \eqref{eq:phi-derivative} shows that
$R\mapsto\phi_R$ is nondecreasing; hence $\phi_Q\ge\phi_P$ and both
$\theta_-(m)$ and $\theta_+(m)$ belong to $[0,\pi/2]$.  Moreover,
\[
  \chi_R(m)=\operatorname{arcosh}
  \left(\frac{\cosh\tau_R}{\sqrt{1+m^2}}\right)
\]
is continuous in $m$, and so is the first-quadrant unit vector
\[
  (\cos\phi_R(m),\sin\phi_R(m))
  =\frac1{\sinh\tau_R}
  \bigl(m\cosh\chi_R(m),\sinh\chi_R(m)\bigr).
\]
Thus each $m\mapsto\theta_\sigma(m)$ is continuous.  The minus branch
contains $0$ and $\theta_*$, while the plus branch contains $\theta_*$ and
$\pi/2$; their images therefore cover $[0,\pi/2]$.
\end{proof}

Now define
\begin{equation}\label{eq:zeta-definition}
  \zeta_-(m):=1,
  \qquad
  \zeta_+(m):=e^{-\chi_P}e^{i\phi_P},
\end{equation}
and, for $P\le R\le Q$,
\begin{equation}\label{eq:eta-path}
  \eta_m(R):=
  \exp\left[
    -\frac14\log\frac RP
    -\frac{\chi_R-\chi_P}{2}
    +i\frac{\phi_R-\phi_P}{2}
  \right].
\end{equation}
In particular,
\begin{equation}\label{eq:eta-at-P}
  \eta_m(P)=1.
\end{equation}

\begin{proposition}[Exact factorization and rank-one generator]
\label{prop:exact-factorization}
For every $m\in[0,m_P]$ and $\sigma\in\{-1,+1\}$,
\begin{equation}\label{eq:factor-product}
  z_\sigma(m)=\zeta_\sigma(m)\eta_m(Q),
\end{equation}
and
\begin{equation}\label{eq:zeta-diagonal}
  \zeta_+(m)\in\partial\Om_{p,p}.
\end{equation}
If $m<m_P$, then $R\mapsto\eta_m(R)$ is continuously differentiable and
\begin{equation}\label{eq:eta-log-derivative}
  \frac{\eta_m'(R)}{\eta_m(R)}=-\frac{c_R}{2}+i\beta_R,
\end{equation}
where
\begin{equation}\label{eq:c-beta}
  c_R:=\frac1{2R}\left(1+\frac{R-1}{d_R}\right),
  \qquad
  \beta_R:=\frac{m}{(R-1)d_R}.
\end{equation}
When $m<m_P$, for every $P\le R\le Q$,
\begin{equation}\label{eq:rank-one-cone}
  \mathsf X_R:=X_{R+1}(c_R,\beta_R,1)
  =
  \begin{pmatrix}
    Rc_R-1 & (R-1)\beta_R\\
    (R-1)\beta_R & c_R
  \end{pmatrix}
  \succeq0,
  \qquad
  \det\mathsf X_R=0.
\end{equation}
For $m=m_P$, the differentiability and generator statements, including
\eqref{eq:rank-one-cone}, hold for $R>P$.
\end{proposition}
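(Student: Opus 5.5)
The argument is a direct verification in four parts, taken in the order of the statement.

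\emph{Factorization \eqref{eq:factor-product}.} Evaluate \eqref{eq:eta-path} at $R=Q$:
\[
\log\eta_m(Q)=-\tfrac14\log\tfrac QP-\tfrac{\chi_Q-\chi_P}2+i\tfrac{\phi_Q-\phi_P}2 .
\]
For $\sigma=-1$ this equals $-L_-(m)+i\theta_-(m)$ by \eqref{eq:theta-sigma}--\eqref{eq:z-sigma}, so $z_-(m)=\eta_m(Q)$. For $\sigma=+1$, multiplying by $\zeta_+(m)=e^{-\chi_P+i\phi_P}$ adds $-\chi_P+i\phi_P$ to the exponent. This produces $-\frac14\log\frac QP-\frac{\chi_Q+\chi_P}2+i\frac{\phi_Q+\phi_P}2$, which is $\log z_+(m)$.

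\emph{Diagonal boundary \eqref{eq:zeta-diagonal}.} Apply \eqref{eq:addition-key} with $P=Q$ and $\sigma=+1$:
\[
\cosh^2\tau_P-\sinh^2\tau_P\cos(2\phi_P)=\cosh(2\chi_P).
\]
Then $y=|\zeta_+|^2=e^{-2\chi_P}$ satisfies $Py+P/y=2P\cosh(2\chi_P)=A_{P,P}(\phi_P)$, using \eqref{eq:A-hyperbolic} with $Q=P$. Hence $\det(D_p-M_{\zeta_+}^{\mathsf T}D_pM_{\zeta_+})=0$ by \eqref{eq:det-polar}.

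The two roots are $e^{\mp2\chi_P}$, and the smaller one is $e^{-2\chi_P}$. By the argument of Lemma \ref{lem:radial-boundary}, which only needs positive definiteness of $B_\theta$, the set $\{\rho:\rho^2\le e^{-2\chi_P}\}$ is the admissible interval. So $\zeta_+$ is the radial boundary point, hence lies in $\Om_{p,p}$ and on its boundary. The degenerate case $\chi_P=0$ (that is, $m=m_P$) gives $\zeta_+=1$, which is the boundary point where $\det=0$ with the double root $y=1$.

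\emph{Generator \eqref{eq:eta-log-derivative}--\eqref{eq:c-beta}.} For $m<m_P$ we have $d_R>0$ on $[P,Q]$. Then $\chi_R$ and $\phi_R$ are $C^1$ by \eqref{eq:chi-definition} and \eqref{eq:tan-phi} together with the implicit function theorem, since $\sinh\chi_R>0$ by \eqref{eq:dR-hyperbolic}. Differentiating the exponent of \eqref{eq:eta-path} and using \eqref{eq:chi-derivative}--\eqref{eq:phi-derivative} gives
\[
\frac{\eta_m'}{\eta_m}=-\frac1{4R}-\frac{R-1}{4Rd_R}+i\frac{m}{(R-1)d_R}.
\]
This is exactly $-c_R/2+i\beta_R$.

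\emph{Rank-one cone \eqref{eq:rank-one-cone}.} First, $\mathsf X_R$ is $X_s(c,\beta,1)$ with $s=R+1$, so $s-1=R$ and $s-2=R-1$. Its diagonal entries are nonnegative: $c_R>0$, and
\[
Rc_R-1=\tfrac12\Bigl(\tfrac{R-1}{d_R}-1\Bigr)\ge0,
\]
because $d_R\le R-1$ by \eqref{eq:dR-definition}. Hence it suffices to show $\det\mathsf X_R=0$. Write $u=(R-1)/d_R\ge1$. Then
\[
(Rc_R-1)c_R=\frac{(u-1)(u+1)}{8R}=\frac{u^2-1}{8R}.
\]
For the off-diagonal term, $(R-1)^2\beta_R^2=m^2/d_R^2$, and \eqref{eq:dR-definition} gives $4Rm^2=(R-1)^2-d_R^2$. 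Therefore $m^2/d_R^2=(u^2-1)/(4R)$.

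Comparing the two expressions, there is a factor-$2$ discrepancy, since $(u^2-1)/(8R)\neq(u^2-1)/(4R)$. This is the main obstacle: the bookkeeping of normalizations must be rechecked carefully. The candidates are the entry $(s-2)\beta$ in $X_s$, the factor $\tfrac12$ in $\theta_\sigma$ and $\eta$ (so that $\beta_R$ is half of $\phi_R'$), and \eqref{eq:phi-derivative} itself. I would first recompute $\phi_R'$ from \eqref{eq:tan-phi}:
\[
\phi_R'=\frac{m\,\chi_R'\sech^2\chi_R}{m^2+\tanh^2\chi_R}.
\]
Then simplify with $m^2\cosh^2\chi_R+\sinh^2\chi_R=\sinh^2\tau_R$ to get $\phi_R'=m\chi_R'/\sinh^2\tau_R$, and hence $\beta_R=\phi_R'/2$. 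I expect the determinant to vanish once the constants are tracked consistently.

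Finally, for $m=m_P$, every step above goes through verbatim on $R>P$, where $d_R>0$.
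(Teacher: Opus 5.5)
\textbf{Gap: the rank-one step is not proved.} Your factorization, diagonal-boundary and generator steps are correct and follow the paper's proof. Noting that the diagonal roots are explicitly $e^{\pm2\chi_P}$ is a slightly cleaner way to pick out the smaller root than the paper's continuity argument. The problem is the final step. As written, the proposal does not establish $\det\mathsf X_R=0$: it asserts a factor-$2$ discrepancy and then only \emph{expects} the determinant to vanish.

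That discrepancy does not exist; it comes from an arithmetic slip in your product of the diagonal entries. With $u=(R-1)/d_R$ you correctly have $Rc_R-1=\tfrac12(u-1)$ and $c_R=\tfrac1{2R}(u+1)$. Hence
\[
(Rc_R-1)\,c_R=\frac12\cdot\frac1{2R}\,(u-1)(u+1)=\frac{u^2-1}{4R},
\]
not $(u^2-1)/(8R)$. This equals your correctly computed $(R-1)^2\beta_R^2=m^2/d_R^2=(u^2-1)/(4R)$, so $\det\mathsf X_R=0$. Together with the nonnegative diagonal entries, this gives $\mathsf X_R\succeq0$ of rank one. This is exactly the paper's computation $c_R(Rc_R-1)=\frac{(R-1)^2-d_R^2}{4Rd_R^2}=\frac{m^2}{d_R^2}=(R-1)^2\beta_R^2$.

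\textbf{No normalization needs changing.} None of the constants you list as suspects is wrong. Your own recomputation $\phi_R'=m\chi_R'/\sinh^2\tau_R=2m/((R-1)d_R)$ already confirms $\beta_R=\phi_R'/2$, consistent with the factor $\tfrac12$ in $\eta_m$. In particular, do not alter the entry $(s-2)\beta$ of $X_s$. It is forced by the first-order expansion \eqref{eq:first-order-expansion}, and changing it would break the link between $\mathsf X_R\succeq0$ and the pathwise monotonicity used in Step~4.

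With the arithmetic corrected, your argument is a complete proof essentially identical to the paper's. One minor point remains: \eqref{eq:tan-phi} requires $m>0$. For $m=0$, simply note that $\phi_R\equiv\pi/2$ and $\beta_R=0$.
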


\begin{proof}
Adding the real exponents and the arguments in
\eqref{eq:zeta-definition}--\eqref{eq:eta-path} gives
\eqref{eq:factor-product}.  For $\sigma=-1$, the factor $\zeta_-$ is one.
For $\sigma=+1$, the real exponent of the first factor is
\[
  -\frac14\log\frac QP-\frac{\chi_Q-\chi_P}{2}.
\]
Multiplication by $e^{-\chi_P+i\phi_P}$ changes this exponent to
$-L_+$ and changes the angle from
$(\phi_Q-\phi_P)/2$ to $\theta_+$.

To verify \eqref{eq:zeta-diagonal}, put
$y=\abs{\zeta_+(m)}^2=e^{-2\chi_P}$.  Formula
\eqref{eq:A-hyperbolic}, with $Q=P$ and $\theta=\phi_P$, gives
\[
  \frac{A_{P,P}(\phi_P)}{2P}
  =\cosh^2\tau_P-\sinh^2\tau_P\cos(2\phi_P).
\]
Using \eqref{eq:chi-definition}--\eqref{eq:phi-sin}, the right side equals
\begin{align*}
 &(1+m^2)\cosh^2\chi_P
 -\bigl(m^2\cosh^2\chi_P-\sinh^2\chi_P\bigr)\\
 &=\cosh(2\chi_P).
\end{align*}
Therefore
\[
  A_{P,P}(\phi_P)
  =2P\cosh(2\chi_P)
  =P\left(y+\frac1y\right),
\]
which is exactly the diagonal boundary equation
$Py^2-A_{P,P}(\phi_P)y+P=0$.  At $m=0$,
$\zeta_+(0)=i/\sqrt P$, the smaller boundary root on the positive imaginary
axis.  For $0<m<m_P$ one has $\phi_P>0$, so the two diagonal roots are
distinct.  Since $m\mapsto\abs{\zeta_+(m)}^2$ is a continuous root that
starts as the smaller root, it cannot switch to the larger root before the
two roots meet.  Radial Loewner monotonicity identifies this smaller root as
the admissible boundary radius.  At $m=m_P$, the roots meet at $1$ and
$\zeta_+(m_P)=1$.

Differentiating \eqref{eq:eta-path} and using
\eqref{eq:chi-derivative}--\eqref{eq:phi-derivative}, we obtain
\begin{align*}
  \frac{\eta_m'(R)}{\eta_m(R)}
  &=-\frac1{4R}-\frac12\frac{R-1}{2Rd_R}
    +i\frac12\frac{2m}{(R-1)d_R}\\
  &=-\frac1{4R}\left(1+\frac{R-1}{d_R}\right)
    +i\frac{m}{(R-1)d_R},
\end{align*}
which proves \eqref{eq:eta-log-derivative}--\eqref{eq:c-beta}.
Since $0<d_R\le R-1$,
\begin{equation}\label{eq:R-c-minus-one}
  Rc_R-1=\frac{R-1-d_R}{2d_R}\ge0,
\end{equation}
while $c_R>0$.  Finally,
\begin{align}
  c_R(Rc_R-1)
  &=\frac{(R-1)^2-d_R^2}{4Rd_R^2}\notag\\
  &=\frac{m^2}{d_R^2}
  =(R-1)^2\beta_R^2.\label{eq:rank-one-determinant}
\end{align}
Thus the matrix in \eqref{eq:rank-one-cone} has nonnegative diagonal entries
and determinant zero, so it is positive semidefinite of rank one.
\end{proof}

\subsection{Step 4: integration of the flow}

For a two-point function $f(x)=a+bx$, one has
\begin{equation}\label{eq:Tz-two-point}
  T_zf(x)=a+zbx,
  \qquad x\in\{-1,1\}.
\end{equation}
Validity of \eqref{eq:two-point-main} for $z$ is equivalent to validity for
$-z$ and for $\overline z$: changing $z$ to $-z$ merely interchanges the two
output values, while complex conjugation reduces the assertion for
$\overline z$ to the assertion for $z$.  The set $\Om_{p,q}$ has the same
symmetries.  It is therefore enough to consider $z$ in the first quadrant.

First suppose that $z$ lies on the radial boundary of $\Om_{p,q}$.  By Propositions
\ref{prop:boundary-parametrization} and \ref{prop:exact-factorization}, there are
$m\in[0,m_P]$ and $\sigma\in\{-1,+1\}$ such that
\begin{equation}\label{eq:z-factor-proof}
  z=z_\sigma(m)=\zeta_\sigma(m)\eta_m(Q).
\end{equation}
Let $f$ be a two-point function.  The diagonal theorem
\eqref{eq:diagonal-input}, together with \eqref{eq:zeta-diagonal}, gives
\begin{equation}\label{eq:diagonal-contractivity-proof}
  \norm{T_{\zeta_\sigma(m)}f}_p\le\norm f_p;
\end{equation}
for $\sigma=-1$ this is simply the identity operator.  Set
\[
  g:=T_{\zeta_\sigma(m)}f.
\]
If $g=0$, there is nothing to prove.  Otherwise write
\begin{equation}\label{eq:g-a-b}
  g(x)=a+bx.
\end{equation}

Assume first that $m<m_P$ and define
\begin{equation}\label{eq:Phi-definition}
  \Phi(R):=\log\norm{T_{\eta_m(R)}g}_{R+1},
  \qquad P\le R\le Q.
\end{equation}
Since $\eta_m(P)=1$,
\begin{equation}\label{eq:Phi-P}
  \Phi(P)=\log\norm g_p.
\end{equation}
If $a\ne0$, then
\begin{equation}\label{eq:Phi-H-representation}
  \Phi(R)=\log\abs a
   +H_{R+1}\left(\eta_m(R)\frac ba\right).
\end{equation}
Apply the pathwise monotonicity \eqref{eq:pathwise-monotonicity} with
\begin{equation}\label{eq:flow-substitution}
\begin{aligned}
  s(R)&=R+1,
  &\mu(R)&=\eta_m(R),
  &\kappa(R)&=1,\\
  c(R)&=c_R,
  &\beta(R)&=\beta_R.
\end{aligned}
\end{equation}
The logarithmic derivative is \eqref{eq:eta-log-derivative}. Moreover,
\eqref{eq:rank-one-cone} gives the required first-order admissibility
condition, because
\begin{equation}\label{eq:X-rank-one-identification}
  X_{R+1}(c_R,\beta_R,1)
  =\begin{pmatrix}
    Rc_R-1&(R-1)\beta_R\\
    (R-1)\beta_R&c_R
  \end{pmatrix}.
\end{equation}
Consequently,
\begin{equation}\label{eq:Phi-monotone}
  \Phi'(R)\le0,
  \qquad P<R<Q.
\end{equation}
The differentiability remains valid when one of the two-point values
vanishes, by the regularity noted after \eqref{eq:ERI}.

If $a=0$, then $b\ne0$ and
\[
  \norm{T_{\eta_m(R)}g}_{R+1}=\abs{\eta_m(R)b}.
\]
Therefore
\[
  \Phi'(R)=\Rea\frac{\eta_m'(R)}{\eta_m(R)}
  =-\frac{c_R}{2}\le0,
\]
so \eqref{eq:Phi-monotone} also holds in this case.  Integrating from $P$
to $Q$ yields
\begin{equation}\label{eq:eta-contraction}
  \norm{T_{\eta_m(Q)}g}_q\le\norm g_p.
\end{equation}
On the two-point space, \eqref{eq:Tz-two-point} gives directly
$T_uT_v=T_{uv}$.  Combining \eqref{eq:z-factor-proof},
\eqref{eq:diagonal-contractivity-proof}, and
\eqref{eq:eta-contraction}, we obtain
\[
  \norm{T_zf}_q
  =\norm{T_{\eta_m(Q)}T_{\zeta_\sigma(m)}f}_q
  \le\norm{T_{\zeta_\sigma(m)}f}_p
  \le\norm f_p.
\]

Now let $m=m_P$.  Choose $m_j\uparrow m_P$.  Then
$z_\sigma(m_j)\to z_\sigma(m_P)$, and for fixed $a,b$ the map
\[
  z\longmapsto
  \left(\frac{\abs{a+zb}^q+\abs{a-zb}^q}{2}\right)^{1/q}
\]
is continuous.  Passing to the limit in the inequality already proved for
$m_j<m_P$ establishes the remaining boundary point.  This approximation is
needed because $d_P=0$ when $m=m_P$.

We have proved the estimate on the radial boundary.  If $z$ is an interior
point and $z\ne0$, \Cref{lem:radial-boundary} gives
\begin{equation}\label{eq:interior-radial-factor}
  z=t z_0,
  \qquad 0<t<1,
\end{equation}
where $z_0$ is the boundary point in the same direction.  The real
two-point noise operator is a Markov contraction on every $L^q$, because
\[
  T_th(x)=\frac{1+t}{2}h(x)+\frac{1-t}{2}h(-x)
\]
and Jensen's inequality gives $\norm{T_th}_q\le\norm h_q$.  Since
$T_z=T_tT_{z_0}$,
\[
  \norm{T_zf}_q
  \le\norm{T_{z_0}f}_q
  \le\norm f_p.
\]
The case $z=0$ is immediate.  This proves \Cref{thm:two-point}.

\subsection{Step 5: the dual range below two}

For $1<s<\infty$, let $s'=s/(s-1)$.  Then
\begin{equation}\label{eq:D-duality}
  D_{s'}=D_s^{-1},
  \qquad
  M_{\overline z}=M_z^{\mathsf T}.
\end{equation}
We first record the exact duality of the admissible regions:
\begin{equation}\label{eq:Omega-duality}
  z\in\Om_{p,q}
  \quad\Longleftrightarrow\quad
  \overline z\in\Om_{q',p'}.
\end{equation}
Indeed, with
\[
  B:=D_q^{1/2}M_zD_p^{-1/2},
\]
Lemma \ref{lem:matrix-domain} says that $z\in\Om_{p,q}$ precisely when
$B^{\mathsf T}B\preceq I$.  A matrix and its transpose have the same
operator norm, so this is equivalent to $BB^{\mathsf T}\preceq I$, or
\[
  M_zD_p^{-1}M_z^{\mathsf T}\preceq D_q^{-1}.
\]
Using \eqref{eq:D-duality}, this is exactly
\[
  M_{\overline z}^{\mathsf T}D_{p'}M_{\overline z}
  \preceq D_{q'},
\]
which proves \eqref{eq:Omega-duality} by Lemma \ref{lem:matrix-domain}.

Now suppose $3/2<p<q<2$ and $z\in\Om_{p,q}$.  Then
\[
  2<q'<p'<3,
  \qquad
  \overline z\in\Om_{q',p'}.
\]
The result already proved above, applied to the pair $(q',p')$ and
multiplier $\overline z$, gives
\[
  \norm{T_{\overline z}h}_{p'}\le\norm h_{q'}.
\]
With respect to normalized counting measure on the two-point space,
$T_z^*=T_{\overline z}$.  Hence
\begin{align*}
  \norm{T_zf}_q
  &=\sup_{\norm h_{q'}=1}\abs{\ip{T_zf}{h}}\\
  &=\sup_{\norm h_{q'}=1}\abs{\ip f{T_{\overline z}h}}\\
  &\le\norm f_p.
\end{align*}
Thus the two
strict open ranges of complex hypercontractivity on the Hamming cube follow.

\section*{Acknowledgments}
 Y.~C.~H.~would like to thank Yanqi Qiu for helpful support over the years.
 P.~I.~acknowledges partial support from the US NSF CAREER grant DMS-2152401, US NSF grant DMS-2554183,  a Simons Fellowship, and a Humboldt Research Fellowship for Experienced Researchers. The authors acknowledge the use of AI tools. All mathematical arguments and proofs in the final manuscript were checked and written by the authors.

\end{document}